\newcommand*\samethanks[1][\value{footnote}]{\footnotemark[#1]}
\newtheorem{assumption}[theorem]{Assumption}
\def\bu{\mathbf{u}}
\def\bp{\mathbf{p}}
\def\dx{\dot{x}}
\def\dq{\dot{q}}
\def\Tt{^{{\mbox{\tiny \bf \sf T}}}}
\def\J{{\cal{J}}}
\def\K{{\cal{K}}}
\def\bx{{\mathbf{x}}}
\def\bq{{\mathbf{q}}}
\def\be{{\mathbf{e}}}
\def\tP{{\tilde{P}}}
\def\tU{{\tilde{U}}}
\def\tbu{{\tilde{\bu}}}
\def\hc{{\hat{c}}}
\def\hb{{\hat{b}}}
\def\hx{{\hat{x}}}
\def\hq{{\hat{q}}}
\def\hbeta{{\hat{\beta}}}
\def\hgamma{{\hat{\gamma}}}
\def\setZ{\mathbb{Z}}
\def\setU{\mathbb{U}}
\def\setP{\mathbb{P}}
\def\setR{\mathbb{R}}
\begin{document}
\title{Symmetric Strong Duality for a Class of Continuous Linear Programs with Constant Coefficients}

\author{Evgeny Shindin \thanks{
Department of Statistics,
The University of Haifa,
Mount Carmel 31905, Israel; 
{\tt shindin@netvision.net.il, gweiss@stat.haifa.ac.il} 
Research supported in part by
Israel Science Foundation Grant and 711/09.}          
\and Gideon Weiss \samethanks}
\maketitle

\begin{abstract}
We consider Continuous Linear Programs over a continuous finite time horizon $T$, with linear cost coefficient functions and linear right hand side functions and a constant coefficient matrix, where we search for optimal solutions in the space of measures or of functions of bounded variation.  
These models generalize the Separated Continuous Linear Programming models and their various duals, as formulated in the past by Anderson, by Pullan, and by Weiss.  
We present simple necessary and sufficient conditions for feasibility.  
We formulate a symmetric dual and investigate strong duality by considering discrete time approximations.  We prove that under a Slater type condition there is no duality gap and there exist optimal solutions which have impulse controls at $0$ and $T$ and have piecewise constant densities in $(0,T)$.  Moreover, we show that under non-degeneracy assumptions all optimal solutions are of this form, and are uniquely determined over $(0,T)$.
\end{abstract}
\medskip
\begin{keywords} 
Continuous linear programming, symmetric dual, strong duality.
\end{keywords} 

\begin{AMS}
34H99,49N15,65K99,90C48
\end{AMS}

\pagestyle{myheadings}
\thispagestyle{plain}
\markboth{E. SHINDIN AND G. WEISS}{SYMMETRIC STRONG DUALITY FOR A CLASS OF CLP}

\pagenumbering{arabic}

\section{Introduction}
\label{sec.introduction}
We consider problems of the form:
\begin{eqnarray}
\label{eqn.mpclp}
&\max & \int_{0-}^T (\gamma+ (T-t)c)\Tt dU(t)  \nonumber\\
\mbox{M-CLP}\, & \mbox{s.t.} & \qquad A\, U(t)  \quad \le \beta + b t, 
\quad 0 \le t \le T, \\
 &&  U(t) \ge 0,\; U(t) \mbox{ non-decreasing and right continuous on [0,T].}\nonumber 
\end{eqnarray}
where $A$ is a $K\times J$ constant matrix, $\beta,b,\gamma,c$ are constant vectors of corresponding dimensions, the integrals are Lebesgue-Stieltjes,  $U$ are $J$ unknown functions over the time horizon $[0,T]$, and by convention we take $U(0-)=0$.

We formulate a symmetric dual problem
\begin{eqnarray}
\label{eqn.mdclp}
&\min & \int_{0-}^T (\beta+(T-t)b)\Tt dP(t)  \nonumber \\
\mbox{M-CLP$^*$}\, & \mbox{s.t.} & \qquad  A\Tt P(t)  \quad \ge \gamma + c t,  
\quad 0 \le t  \le T, \\
 &&   P(t) \ge 0,\; P(t) \mbox{ non-decreasing and right continuous on [0,T].}\nonumber 
\end{eqnarray}
with $K$ unknown dual functions $P$ with the same convention $P(0-)=0$.  It is convenient to think of dual time as running backwards, so that $P(T-t)$ corresponds to $U(t)$.

 The main feature to note here is that the objective as well as the left hand side of the constraints are formulated as Lebesgue-Stieltjes integrals with respect to a vector of  monotone non-decreasing control function $U(t)$, in other words our controls are in the space of measures.  This is in contrast to most formulations in which the objective and left hand side of the constraints are Lebesgue integrals with respect to a measurable bounded control $u(t)$, in other words  controls which are in the space of densities.  In particular, while in the usual formulation the left hand side of the constraints is an absolutely continuous function, our formulation allows the left hand side of the constraint to have jumps, as a result of jumps in $U(t)$, which correspond to impulse controls.

Our main results in this paper include the following:
\begin{itemize}
\item
We discuss how this formulation relates to and generalizes  previous continuous linear programs.
\item
We show weak duality and present a simple necessary and sufficient test for feasibility of M-CLP. We also present a  Slater type condition which is easily checked, using the same test.
\item
We show that under this Slater type condition there is no duality gap between M-CLP and M-CLP$^*$, by considering discrete time approximations. We also show that in this case M-CLP and M-CLP$^*$ posses optimal solutions.
\item
We further show that in that case there exist optimal solutions for which
  $U(t)$ and $P(t)$ have impulse controls at $0,T$ and  are absolutely continuous inside $(0,T)$, with piecewise constant densities. 
 \item 
 Finally, under appropriate simple non-degeneracy assumptions we show that  all optimal solutions are of this form, and that the absolutely continuous part on $(0,T)$ is uniquely determined.
\end{itemize}  
Further research to develop a simplex-type algorithm that constructs solutions of this form is in progress.

We note that the question of existence of strong duality, and whether symmetric dual formulations are useful is far from simple when dealing with linear programs in infinite dimensional spaces \cite{barvinok:02,shapiro:01}.  Our results in this paper furnish an example where indeed strong duality can hold with a symmetric dual, if a Slater type condition is satisfied.

\section{Background and motivation}

Continuous linear programs were introduced by Bellman in 1953 \cite{bellman:53,bellman:57} to model economic processes:  find a bounded measurable $u$ which
\begin{eqnarray}
\label{eqn.bellman}
&\max & \int_0^T c\Tt (t) u(t) dt \nonumber \\
\mbox{Bellman-CLP  }\quad \quad &\mbox{s.t.} & H(t) u(t) + \int_0^t G(s,t) u(s) ds \le
a(t),  \hspace{0.6in}
\\ &&  u(t) \ge 0, \quad t \in [0,T].  \nonumber 
\end{eqnarray}
Where $G(s,t),H(t)$ are given matrix functions.
These problems were investigated by Dantzig and some of his students, to model continuous time Leontief systems, and by several other early authors \cite{dantzig:51,grinold:70,levinson:66,tyndall:65,tyndall:67}, with many publications since, but up to date no efficient algorithms or coherent theory have emerged, and these problems are considered very hard.

Separated continuous linear programs (SCLP) were introduced by Anderson \cite{anderson:78,anderson:81} in the context of job-shop scheduling:
 \begin{eqnarray}
\label{eqn.ASCLP}
&\max & \int_0^T c(t)\Tt u(t) \, dt \nonumber\\
\mbox{Anderson-SCLP    }\quad &\mbox{s.t.} & \int_0^t G u(s) ds  \le
 a(t), \hspace{1.7in}\\
&&  \quad  H u(t)  \qquad \le b(t),
\nonumber
\\ &&  u(t) \ge 0, \quad t \in [0,T]  \nonumber.
\end{eqnarray}
where $G,H$ are constant matrices, and $a(t),b(t),c(t)$ are given  vector functions.  
Some special cases of SCLP were solved by Anderson and Philpott \cite{anderson-philpott:89,anderson-philpott:89:2}, and  this research and related earlier work were summarized in the 1987 book of Anderson and Nash \cite{anderson-nash:87}, which also contains many references to work on CLP up to that date.

Major progress in the theory of SCLP was achieved by Pullan \cite{anderson-pullan:96}, \cite{pullan:93}--\cite{pullan:02}.   Pullan
considered SCLP problems with $a(t)$, $ b(t)$ and $c(t)$ piecewise analytic, and formulated a non-symmetric dual to (\ref{eqn.ASCLP}) (here we modify Pullan's original version by  letting the dual run in reversed time, as in (\ref{eqn.mdclp})):
\begin{eqnarray}
\label{eqn.pullandsclp}
&\min & \int_0^T a(T-t)\Tt d P(t) + \nonumber \int_0^T b(T-t)\Tt q(t) d t  \hspace{0.6in}\\
\mbox{Pullan-SCLP$^*$   }\quad 
&\mbox{s.t.}&   G\Tt  P(t) + H\Tt q(t) \ge c(T-t),  \\
&&  P(t)\ge 0,\; P(t) \mbox{ non-decreasing and right continuous on [0,T].}  \nonumber \\	  
&&  q(t) \ge 0, \; t \in [0,T],\;   \nonumber
\end{eqnarray}
Pullan showed that when the feasible region of $H u(t) \le b(t)$ is bounded strong duality holds between (\ref{eqn.ASCLP}) and (\ref{eqn.pullandsclp}). In the special case that $a(t),c(t)$ are piecewise linear and $b(t)$ piecewise constant Pullan provided an infinite but convergent algorithm to solve the problems and observed that $P$ was absolutely continuous, except for atoms at the breakpoints of $a,b,c$.

The results of Pullan raised several questions:
\begin{itemize}
\item
Is the boundedness restriction necessary?
\item
Can one formulate a symmetric dual?
\item
Do solutions of the form observed by Pullan always exist? 
\end{itemize}
More recently Weiss \cite{weiss:08} considered the following SCLP problem
\begin{eqnarray}
\label{eqn.PWSCLP}
&\max & \int_0^T (\gamma+ (T-t)c)\Tt u(t) + d\Tt x(t) \,dt   \nonumber   \hspace{1.0in} \\
\mbox{SCLP}  & \mbox{s.t.} &  \int_0^t G\, u(s)\,ds  + F x(t) \le \alpha + a t \\
 && \quad\; H u(t) \le b \nonumber \\
&& \quad x(t), u(t)\ge 0, \quad 0\le t \le T.  \nonumber
\end{eqnarray}
and the symmetric dual
\begin{eqnarray}
\label{eqn.DWSCLP}
&\min & \int_0^T (\alpha + (T-t)a)\Tt p(t) + b\Tt q(t) \,dt     \nonumber  \hspace{0.9in} \\
\mbox{SCLP$^*$}  &\mbox{s.t.} &  \int_0^t  G\Tt\, p(s)\,ds + H\Tt q(t) \ge \gamma + c t \\
 && \quad\; F\Tt p(t) \ge d \nonumber \\
&& \quad q(t), p(t)\ge 0, \quad 0\le t \le T.   \nonumber
\end{eqnarray}
with constant vectors and matrices $G,F,H,\alpha,a,b,\gamma,c,d$.
In contrast to previous work Weiss developed a simplex type algorithm which solves this pair of problems exactly, in a finite bounded number of steps, without using discretization.

The simplex type algorithm of Weiss can solve any pair of problems (\ref{eqn.PWSCLP}), (\ref{eqn.DWSCLP}) which possess optimal solutions $u(t),p(t)$ that are bounded measurable functions. It produces solutions with $u(t),p(t)$  piecewise constant, and $x(t),q(t)$  continuous piecewise linear.  However, there exist problems for which both (\ref{eqn.PWSCLP}) and (\ref{eqn.DWSCLP}) are feasible but either (\ref{eqn.PWSCLP}) or (\ref{eqn.DWSCLP}) or both do not possess optimal solutions $u(t),p(t)$ in the space of bounded measurable functions. Moreover, one can construct examples, where (\ref{eqn.PWSCLP}) possess optimal solutions in the space of bounded measurable functions, but (\ref{eqn.DWSCLP}) is infeasible.
Such problems cannot be solved by the algorithm of Weiss.   
This raises the question whether they can be solved in the space of measures, and motivates our formulation of M-CLP, M-CLP$^*$ problems (\ref{eqn.mpclp}), (\ref{eqn.mdclp}).

\begin{definition}
Consider the SCLP problem (\ref{eqn.PWSCLP}). Then the M-CLP problem with the following data:
\begin{equation*}
\begin{array}{c}
A = \left[ \begin{array}{cccc} G & 0 & F & -F \\   0 & 0 & -I & I \\  
H & I & 0 & 0  \\  -H & -I & 0 & 0 \end{array}  \right] \quad
U(t) = \left[  \begin{array}{c} U_*(t) \\  U_s(t) \\ U^{+}(t) \\ U^{-}(t)  \end{array}  \right] \quad
\beta^* + b^*t =  \left[  \begin{array}{c} \alpha \\  0 \\ 0 \\ 0  \end{array}  \right] +
 \left[ \begin{array}{c} a \\  0 \\ b \\ -b  \end{array} \right] t, \\
\gamma^* + (T-t)c^* = \left[ \gamma \quad 0 \quad\;  d  \quad -d \right] + (T-t)\left[ c \quad 0 \quad\;  0  \quad 0 \right]
\end{array} 
\end{equation*}
is called the M-CLP extension of SCLP.
\end{definition}
\begin{theorem}
\label{thm.generalization}
M-CLP/M-CLP$^*$ are generalizations of  SCLP/SCLP$^*$ in the  following sense: \\
(i) if  SCLP (\ref{eqn.PWSCLP}) and SCLP$^*$ (\ref{eqn.DWSCLP}) possess optimal solutions, then these solutions determine optimal solutions of the corresponding M-CLP/M-CLP$^*$ extensions with the same objective value. \\
(ii) If the M-CLP/M-CLP$^*$  extensions of the SCLP/SCLP$^*$ have  optimal solutions with no duality gap which are absolutely continuous, then this solution determines  optimal solutions of the 
SCLP/SCLP$^*$,  with the same objective value.\\
(iii) If SCLP is feasible and the Slater type condition \ref{def.slater} holds for M-CLP/M-CLP$^*$ extensions, then the supremum of the objective of SCLP is equal to the objective value of the optimal solution of the M-CLP extension. 
%
\end{theorem}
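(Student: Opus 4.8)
The plan is to treat each of the three parts by setting up an explicit, invertible correspondence between SCLP/SCLP$^*$ solutions and the absolutely continuous solutions of the M-CLP/M-CLP$^*$ extensions, and then to transfer optimality across the correspondence using weak duality. For the forward direction I would map a feasible $(u,x)$ of SCLP to the M-CLP control with $U_*(t)=\int_0^t u(s)\,ds$, $U_s(t)=\int_0^t (b-Hu(s))\,ds$, and $U^{+},U^{-}$ chosen so that $U^{+}(t)-U^{-}(t)=x(t)$ (for instance the minimal-variation decomposition of $x$), with the symmetric construction on the dual side. Feasibility is then a row-by-row check: the first block of $A\,U(t)\le\beta^*+b^*t$ becomes $\int_0^t Gu+Fx\le\alpha+at$; the block with right-hand side $0$ becomes $U^{+}\ge U^{-}$, i.e.\ $x\ge 0$; and the two $\pm H$ blocks force $HU_*(t)+U_s(t)=bt$, which is non-decreasing and non-negative exactly when $Hu\le b$. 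Thus the embedded $U$ is M-CLP-feasible, and conversely an \emph{absolutely continuous} M-CLP solution yields, by reading off densities $u=\dot{U}_{*}$ and $x=U^{+}-U^{-}$, a genuine SCLP solution in the space of bounded measurable functions.

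The core computation, common to all three parts, is that the embedding preserves objective value. Here I would rewrite the Lebesgue--Stieltjes objective $\int_{0-}^T(\gamma^*+(T-t)c^*)\Tt dU(t)$ by integration by parts, turning it into $(\gamma^{*})\Tt U(T)+\int_0^T (c^{*})\Tt U(t)\,dt$ (using $U(0-)=0$ and linearity of the coefficient in $t$). Substituting the block structure of $\gamma^*,c^*$ and of $U$, and applying Fubini to $\int_0^T c\Tt U_*(t)\,dt$, collapses the $U_*$ contribution to $\int_0^T(\gamma+(T-t)c)\Tt u(t)\,dt$ and the $U^{\pm}$ contribution to the SCLP term carried by $d$; the $U_s$ block drops out since its cost coefficient is $0$. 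The same manipulation performed on M-CLP$^*$ recovers the SCLP$^*$ objective. This step is where the $(T-t)$ weighting is essential and is the one I expect to require the most careful bookkeeping.

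With the correspondence and the objective identity in hand, the three parts follow. For (i), embedding the optimal SCLP and SCLP$^*$ solutions produces an M-CLP/M-CLP$^*$ feasible pair whose objective values are equal (both equal the common SCLP/SCLP$^*$ optimum); since weak duality has already been established for M-CLP/M-CLP$^*$, equal objectives force both embedded solutions to be optimal with no gap. For (ii), an absolutely continuous optimal M-CLP/M-CLP$^*$ pair with no gap maps back to SCLP/SCLP$^*$ feasible solutions of equal objective value, and weak duality for SCLP/SCLP$^*$ then certifies their optimality. For (iii), the paper's main results give, under the Slater type condition, an optimal M-CLP solution and no duality gap; call the optimal value $V$. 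The embedding of part (i) applied to arbitrary SCLP-feasible points yields $V\ge\sup$ SCLP, while the reverse direction applied to solutions approximating the optimal M-CLP solution is meant to yield the matching inequality.

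I expect the main obstacle to be the upper bound $V\le\sup$ SCLP in part (iii): since an optimal M-CLP solution may carry genuine impulse controls at $0$ and $T$ (and SCLP need not attain its supremum at all --- precisely the situation that motivated M-CLP), one cannot simply restrict the optimal measure to obtain an SCLP-feasible point. Instead I would approximate the optimal $U$ by a sequence of absolutely continuous M-CLP-feasible solutions --- smoothing the boundary impulses over shrinking intervals $[0,\varepsilon]$ and $[T-\varepsilon,T]$ while preserving feasibility --- whose objective values converge to $V$; each such solution maps back to an SCLP-feasible point by the reverse map of part (ii), giving SCLP objective values approaching $V$ and hence $\sup$ SCLP $\ge V$. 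Controlling feasibility and objective continuity under this smoothing, and verifying that no mass is lost in the limit, is the delicate part of the argument.
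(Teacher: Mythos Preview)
Your treatment of parts (i) and (ii) matches the paper's: the paper also builds the explicit correspondence $U_*(t)=\int_0^t u$, $U_s(t)=\int_0^t (b-Hu)$, $x=U^+-U^-$, checks block-by-block feasibility, observes that the objective values coincide, and then closes with weak duality on the M-CLP/M-CLP$^*$ side (for (i)) or on the SCLP/SCLP$^*$ side (for (ii)). The integration-by-parts bookkeeping you outline is exactly what is implicitly used.

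For part (iii) there is a genuine gap in your plan. The smoothing you propose --- spreading the boundary impulses of the optimal M-CLP solution over $[0,\varepsilon]$ and $[T-\varepsilon,T]$ --- does not preserve M-CLP feasibility in general. If $\mathbf{u}^0=U^*(0)$ is the impulse at $0$ and you linearly interpolate, then on $[0,\varepsilon]$ you have $AU^\varepsilon(t)=AU^*(t)-(1-t/\varepsilon)A\mathbf{u}^0$; feasibility requires $(1-t/\varepsilon)A\mathbf{u}^0\ge 0$, i.e.\ $A\mathbf{u}^0\ge 0$, which you do not have (you only have $A\mathbf{u}^0\le\beta^*$). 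Since the extension matrix $A$ contains the blocks $-F$, $-I$, $-H$, negative entries are unavoidable, and the feasibility defect is of order $|A\mathbf{u}^0|$, \emph{not} $O(\varepsilon)$: it cannot be absorbed by a Slater slack in the limit $\varepsilon\to 0$. Concretely, the equality rows $HU_*(t)+U_s(t)=bt$ may force $U_s^\varepsilon$ to be non-monotone. So the step you flag as ``delicate'' is in fact the point where the argument breaks.

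The paper avoids smoothing entirely. For the bound $\sup\mathrm{SCLP}\ge V(\mbox{M-CLP})$ it adds an artificial bound $u(t)\le W$ to SCLP, forming SCLP$(W)$; in the corresponding extension M-CLP$(W)$ this introduces extra rows $IU_*+\text{slack}=Wt$, which \emph{force} $U_*(0)=0$ and $U_*(T)=U_*(T-)$ --- no impulses in $U_*$ by construction. Hence the optimal M-CLP$(W)$ solution (which exists with no gap under the Slater condition, preserved for large $W$) maps back to a genuine SCLP$(W)$ optimum with the same value. Letting $W\to\infty$ and invoking a Helly-type compactness argument on the uniformly bounded sequence of optima gives $V(\mbox{M-CLP}(W))\to V(\mbox{M-CLP})$, while $V(\mathrm{SCLP}(W))\uparrow \sup\mathrm{SCLP}$. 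For the reverse bound $\sup\mathrm{SCLP}\le V(\mbox{M-CLP})$ the paper does not embed directly (that would require every feasible $x$ to be of bounded variation) but instead routes through an intermediate Pullan-type dual DCLP, showing $V(\mathrm{SCLP})\le V(\mathrm{DCLP})=V(\mbox{M-CLP})$ via two weak-duality pairings and the no-gap result.
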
 
\begin{proof}
(i) Consider an optimal solution $x^*(t), u^*(t)$ of (\ref{eqn.PWSCLP}).  By the Structure Theorem (Theorem 3 in \cite{weiss:08}) $x^*(t)$ is absolutely continuous and hence of bounded variation.  Therefore we can write $x^*(t)$ as the difference of two non-decreasing functions $x^*(t) = U^{+}(t) - U^{-}(t)$.
Let $u_s(t)$ be the slacks of the constraints $Hu(t) \le b$, and let $U(t) = \int_0^t u^*(t) dt$, $U_s(t) = \int_0^t u_s(t) dt$.  Then the resulting $\tU = [U_*, U_s,U^{+},U^{-}]$    satisfies the constraints of the M-CLP extension, with the same objective value.  

A similar argument applies to an optimal solution  $q^*(t), p^*(t)$ of (\ref{eqn.DWSCLP}), which 
determines a feasible solution $\tP$ of the  M-CLP$^*$ extension, which is dual to  M-CLP, and has the same objective values.  

Weak duality of 
M-CLP and M-CLP$^*$ (see Proposition \ref{thm:weakduality} below) then shows that these solutions are the optimal solutions of M-CLP and M-CLP$^*$. 

(ii)  If the solution of the M-CLP extension is absolutely continuous then taking $u(t)=\frac{dU(t)}{dt}$ and $x(t)=U^+(t)-U^-(t)$ we get a feasible solution of SCLP, with the same objective value.  
The same holds for SCLP$^*$, and by weak duality these are optimal solutions.

(iii) The proof of this part is postponed to Section \ref{sec.solform}, after Theorem \ref{thm.solform}.
\qquad\end{proof}

It is not hard to see that  (\ref{eqn.mpclp}), (\ref{eqn.mdclp}) generalize also
Anderson and Pullan's problems (\ref{eqn.ASCLP}), (\ref{eqn.pullandsclp}) restricted to $a(t),c(t)$ affine, and  $b(t)$  constant.


\section{Weak duality, complementary slackness and feasibility}
\hspace*{1 cm}

\begin{proposition}
\label{thm:weakduality}
Weak duality holds for M-CLP, M-CLP$^*$ (\ref{eqn.mpclp}),(\ref{eqn.mdclp}).
\end{proposition}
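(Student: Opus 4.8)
The plan is to fix an arbitrary feasible $U$ for M-CLP and an arbitrary feasible $P$ for M-CLP$^*$ and to establish the single inequality
\begin{equation*}
\int_{0-}^T (\gamma+(T-t)c)\Tt dU(t) \;\le\; \int_{0-}^T (\beta+(T-t)b)\Tt dP(t)
\end{equation*}
by the usual linear-programming weak-duality chain, adapted to Lebesgue--Stieltjes integrals: two inequalities coming from the two constraint systems (each exploiting that $dU$ and $dP$ are non-negative vector measures, since $U$ and $P$ are non-decreasing), linked by a Fubini/integration-by-parts identity that transfers $A$ across the two measures. The device that makes the two constraint systems align is the backward-time correspondence $P(T-t)\leftrightarrow U(t)$ noted after (\ref{eqn.mdclp}): the primal cost carries the decreasing form $\gamma+(T-t)c$ while the dual constraint carries the increasing form $\gamma+ct$, and these match only after the substitution $t\mapsto T-t$.

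For the first inequality I would evaluate the dual constraint $A\Tt P(s)\ge \gamma+cs$ at $s=T-t$, giving $A\Tt P(T-t)\ge \gamma+(T-t)c$ pointwise, and integrate against the non-negative measure $dU(t)$ to obtain
\begin{equation*}
\int_{0-}^T (\gamma+(T-t)c)\Tt dU(t) \;\le\; \int_{0-}^T P(T-t)\Tt A\, dU(t).
\end{equation*}
Symmetrically, evaluating the primal constraint $A\,U(s)\le \beta+bs$ at $s=T-t$ and integrating against $dP(t)\ge 0$ gives
\begin{equation*}
\int_{0-}^T U(T-t)\Tt A\Tt dP(t) \;\le\; \int_{0-}^T (\beta+(T-t)b)\Tt dP(t),
\end{equation*}
whose right-hand side is precisely the dual objective.

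It then remains to prove the bridging identity
\begin{equation*}
\int_{0-}^T P(T-t)\Tt A\, dU(t) \;=\; \int_{0-}^T U(T-t)\Tt A\Tt dP(t).
\end{equation*}
My approach here is to write $P(T-t)=\int_{0-}^{T} \mathbf{1}\{s\le T-t\}\,dP(s)$ (using $P(0-)=0$ and right continuity), so that the left-hand side becomes the double Stieltjes integral of $dP(s)\Tt A\,dU(t)$ over the closed triangle $\{s+t\le T\}$; interchanging the order of integration and recognizing $\int_{0-}^T \mathbf{1}\{t\le T-s\}\,dU(t)=U(T-s)$ yields the claim, and chaining the three displays gives weak duality. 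I expect the only real obstacle to be the rigorous justification of this interchange: one must note that each scalar component of $dU$ and $dP$ is a finite non-negative measure (true since $U,P$ are of bounded variation on $[0,T]$), treat the coupling through $A$ as a finite linear combination so that Tonelli's theorem applies componentwise, and check that both iterated integrals handle the diagonal $s+t=T$ and the atoms at $0$ and $T$ consistently --- a bookkeeping point that is controlled by the convention $U(0-)=P(0-)=0$ together with right continuity.
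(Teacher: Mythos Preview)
Your proposal is correct and follows essentially the same route as the paper's proof: two inequalities obtained by integrating the primal and dual constraints (evaluated at $T-t$) against the non-negative measures $dP$ and $dU$ respectively, bridged by a Fubini interchange over the triangle $\{s+t\le T\}$. The paper writes the middle terms as iterated integrals $\int_0^T\big(\int_0^{T-t}A\,dU(s)\big)\Tt dP(t)$ and $\int_0^T\big(\int_0^{T-t}A\Tt dP(s)\big)\Tt dU(t)$, which are exactly your $\int U(T-t)\Tt A\Tt dP(t)$ and $\int P(T-t)\Tt A\,dU(t)$ once one unpacks $U(T-t)=\int_{0-}^{T-t}dU(s)$; your proposal is in fact slightly more explicit than the paper about why Fubini applies (finite non-negative measures, componentwise Tonelli) and about the boundary bookkeeping.
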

\begin{proof}
Let $U(t),\,P(t)$ be feasible solutions for  (\ref{eqn.mpclp}), (\ref{eqn.mdclp}), and compare
their objective values:
\begin{eqnarray}
\label{eqn.weakduality}
&& \mbox{Dual objective }= \nonumber \\
&=&  \int_0^T \big(\beta+(T-t)b \big)\Tt  dP(t)  
\nonumber \\
&\ge&   
 \int_0^T  \Big( \int_0^{T-t} A dU(s)  \Big)\Tt dP(t)  \nonumber \\
&=&   \int_0^T  \Big( \int_0^{T-t} A\Tt dP(s) \Big)\Tt dU(t)  \nonumber \\
&\ge&  \int_0^T  \big( \gamma + (T-t) c \big)\Tt dU(t)    \nonumber \hspace{0.8in} \\
&& =  \mbox{Primal objective. }  \nonumber
\end{eqnarray}
The first inequality follows from the primal constraints at $T-t$, and from $P(t)$ non-decreas\-ing.  The equality follows by changing order of integration, using Fubini's theorem.  The second inequality follows from the dual constraints at $T-t$, and from $U(t)$ non-decreasing.
\qquad\end{proof}

Equality of the primal (M-CLP) and dual (M-CLP$^*$) objective will occur if and only if the following holds:

{\bf Complementary slackness condition.}
Let $x(t)=\beta+bt-AU(t)$ and $q(t)= A\Tt P(t) - \gamma-ct $ be the slacks in (\ref{eqn.mpclp}), (\ref{eqn.mdclp}).  The complementary slackness condition for M-CLP, M-CLP$^*$ is
\begin{equation}
\label{eqn.gcompslack}
 \int_0^T x(T-t)\Tt dP(t) =  \int_0^T q(T-t)\Tt dU(t) = 0.
\end{equation}

In the following propositions and theorems in this and following sections we present results for M-CLP. By  symmetry these results hold for M-CLP$^*$, with the obvious modifications.

We present now a simple necessary and sufficient condition for feasibility.  This is   similar to a condition derived by Wang, Zhang and Yao \cite{wang-zang-yao:09}.  It involves the standard linear program Test-LP and its dual Test-LP$^*$.
 \begin{eqnarray}
\label{eqn.ptestLP}
 &\max &z = (\gamma + c T)\Tt   \bu + \gamma\Tt U  \nonumber  \hspace{1.0in} \\
 \mbox{Test-LP} &\mbox{s.t.} &  A  \bu  \le \beta  \\
 &\mbox{} &  A  \bu + A U  \le \beta + bT \nonumber \\
&& \quad \bu,\, U  \ge 0  \nonumber 
\end{eqnarray}
 \begin{theorem}
\label{the.feas}
M-CLP is feasible if and only if Test-LP is feasible. 
  \end{theorem}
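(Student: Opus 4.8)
The plan is to establish a direct correspondence between feasible solutions of the two problems, exploiting the fact that both sides of the M-CLP constraint $AU(t) \le \beta + bt$ are affine in $t$. The key observation is that, because $U(\cdot)$ is non-decreasing with $U(0-)=0$, evaluating this constraint at the two endpoints $t=0$ and $t=T$ reproduces exactly the two constraint blocks of Test-LP once we identify the impulse $\bu := U(0)$ and the total subsequent increase $U := U(T)-U(0)$. Note that only feasibility is at issue here, so no objective values need to be matched.

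For the ``only if'' direction, given a feasible $U(\cdot)$ for M-CLP I would set $\bu := U(0)$ and $U := U(T)-U(0)$. Monotonicity together with $U(0-)=0$ gives $\bu \ge 0$ and $U \ge 0$; the constraint at $t=0$ gives $A\bu \le \beta$; and the constraint at $t=T$ reads $AU(T) = A\bu + AU \le \beta + bT$. Hence $(\bu,U)$ is feasible for Test-LP.

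For the ``if'' direction, given a feasible $(\bu,U)$ for Test-LP, I would construct the explicit solution consisting of an impulse of size $\bu$ at $t=0$ followed by a constant density, namely $U(t) = \bu + (t/T)\,U$ on $[0,T]$ with $U(0-)=0$. This is non-negative, non-decreasing and right continuous, so it remains only to verify the continuum of constraints $A\bu + (t/T)AU \le \beta + bt$ for all $t \in [0,T]$, rather than merely at the endpoints.

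This last verification is the only substantive step, and it is handled by a convexity argument that uses the affinity of both sides in $t$. Writing $t = \lambda T$ with $\lambda \in [0,1]$, the desired inequality becomes $(1-\lambda)(A\bu) + \lambda(A\bu + AU) \le (1-\lambda)\beta + \lambda(\beta + bT)$, which follows immediately by multiplying the first Test-LP constraint by $1-\lambda \ge 0$, the second by $\lambda \ge 0$, and adding. I expect no genuine obstacle beyond keeping this argument clean; the content of the theorem is precisely that feasibility of the infinite-dimensional problem is certified by the finite linear program obtained from the endpoint constraints, and the affine structure in $t$ makes the intermediate constraints automatic.
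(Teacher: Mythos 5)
Your proposal is correct and follows essentially the same route as the paper: the necessity direction reads off $\bu=U(0)$ and $U=U(T)-U(0)$ from the endpoint constraints, and the sufficiency direction uses the same linear interpolation $U(t)=\bu+\frac{t}{T}U$, with your convex-combination verification being exactly the paper's observation that the slack $x(t)=(1-\frac{t}{T})x^0+\frac{t}{T}x^T$ stays non-negative.
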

\begin{proof}
(i) \textit{Sufficiency:} Let $\bu,\, U$   be a solution of Test-LP  (\ref{eqn.ptestLP}), with slacks $x^0=\beta-A\bu$, $x^T=\beta+bT -A\bu - A U$. 
Then  $U(t)=\bu + \frac{t}{T}U,\,0\le t \le T$ is a feasible solution of M-CLP (\ref{eqn.mpclp}), with non-negative slacks $x(t)= (1 - \frac{t}{T}) x^0 +  \frac{t}{T} x^T$.
  To check this we have for $0 \le t \le T$:
\begin{eqnarray*}
A U(t) + x(t) &=&  A \left(\bu + \frac{t}{T}U\right) + \left(1 - \frac{t}{T}\right) x^0 +  \frac{t}{T} x^T \\
&=&  \frac{t}{T} \left(A \bu + A U + x^T\right)  + \left(1 - \frac{t}{T}\right) \left(A \bu + x^0\right) \\
&=&  \beta + b t.     
\end{eqnarray*}
(ii) \textit{Necessity:} Let $U(t)$ be a feasible solution of M-CLP (\ref{eqn.mpclp}) with slack $x(t)\ge0$. Then $\bu=U(0),\; U = \int_{0^+}^T  dU(t)$ with slack $x^0=x(0), x^T=x(T)$ is a feasible solution for Test-LP (\ref{eqn.ptestLP}), as is seen immediately.  
\qquad\end{proof}

We use the following definition:
\begin{definition}[\bf Slater type condition]
\label{def.slater}
We say that the Test-LP problem (\ref{eqn.ptestLP}) is strictly feasible at $T$ if there exists a feasible solution  $\bu, U$ of (\ref{eqn.ptestLP}) and a constant $\alpha>0$ such that $\beta - A\bu \ge \alpha$ and $\beta + bT - A\bu - AU \ge \alpha$. We say that M-CLP is strictly feasible at $T$ if there exists a feasible solution  $U(t)$ of (\ref{eqn.mpclp}) and a constant $\alpha>0$ such that $\beta + bt - A U(t) \ge \alpha$ for all $t \in [0, T]$.
\end{definition}
\begin{corollary}
\label{cor.strfeas}
M-CLP is strictly feasible if and only if Test-LP is strictly feasible.
\end{corollary}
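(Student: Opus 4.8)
The plan is to reuse, essentially verbatim, the two maps between Test-LP solutions and M-CLP solutions that were constructed in the proof of Theorem \ref{the.feas}, and to check only that each of them transports the Slater constant $\alpha$ of Definition \ref{def.slater}. Since both directions of Theorem \ref{the.feas} are already in hand, the one new thing to verify is that strict feasibility --- a lower bound $\alpha > 0$ on the slacks, read componentwise --- survives these maps.

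For sufficiency, I would start from a strictly feasible witness $\bu, U$ for Test-LP, so that the two endpoint slacks satisfy $x^0 = \beta - A\bu \ge \alpha$ and $x^T = \beta + bT - A\bu - AU \ge \alpha$. The construction of Theorem \ref{the.feas} produces the M-CLP-feasible control $U(t) = \bu + \frac{t}{T} U$ whose slack is the affine interpolation $x(t) = \left(1 - \frac{t}{T}\right) x^0 + \frac{t}{T} x^T$. A convex combination of two vectors each bounded below by $\alpha$ is again bounded below by $\alpha$, so $x(t) \ge \alpha$ for every $t \in [0,T]$, which is exactly strict feasibility of M-CLP with the same constant.

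For necessity, I would take a strictly feasible witness $U(t)$ for M-CLP, so that $x(t) = \beta + bt - A U(t) \ge \alpha$ holds uniformly on $[0,T]$. The construction of Theorem \ref{the.feas} returns the Test-LP-feasible pair $\bu = U(0)$, $U = \int_{0^+}^T dU(t)$, and one checks as there that its endpoint slacks are precisely $x^0 = x(0)$ and $x^T = x(T)$. Evaluating the uniform bound at $t = 0$ and $t = T$ then gives $x^0 \ge \alpha$ and $x^T \ge \alpha$, so Test-LP is strictly feasible with the same $\alpha$.

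I do not expect a genuine obstacle: the corollary is Theorem \ref{the.feas} with the positivity margin carried along. The only step requiring a moment's care is the sufficiency direction, where one must observe that the affine interpolation of the two endpoint slacks preserves the componentwise lower bound $\alpha$ --- which is immediate from convexity --- and not merely preserves nonnegativity. The necessity direction is automatic, since restricting a uniform bound to the two endpoints can only keep it.
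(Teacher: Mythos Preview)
Your argument is correct: you trace the two constructions from Theorem~\ref{the.feas} and verify that each preserves the componentwise lower bound $\alpha$ on the slacks, which is straightforward (convex combination in one direction, restriction to endpoints in the other). The paper takes a slightly different and more compressed route: it observes that strict feasibility of M-CLP with margin $\alpha$ is precisely ordinary feasibility of the M-CLP obtained by replacing $\beta$ with $\beta^* = \beta - \alpha$, and likewise for Test-LP, so the corollary follows by applying Theorem~\ref{the.feas} to the shifted data. The paper's trick buys a one-line proof that avoids reopening the constructions; your approach is more explicit and makes visible exactly where the bound $\alpha$ travels, which is arguably clearer pedagogically even if longer. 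Both are entirely sound and neither requires anything beyond Theorem~\ref{the.feas}.
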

\begin{proof}
Simply define $\beta^* = \beta - \alpha$ and recall Theorem \ref{the.feas} for problems with $\beta$ replaced by $\beta^*$.
\qquad\end{proof}


\section{Discrete time approximations and strong duality}
\label{sec.discretizations}
In this section we consider a pair of M-CLP/M-CLP$^*$ problems which are feasible, and  use time discretization to solve them approximately.  We prove that if  M-CLP and M-CLP$^*$ are strictly feasible, then there is no duality gap and an optimal solution exists.  We use a discretization approach similar to \cite{pullan:93}.


\subsection{General discretizations}

For a partition  $0=t_0 < t_1 < \ldots < t_N=T$ we define  the discretization of M-CLP to be:
\begin{eqnarray}
\label{eqn.dCLP1}
 &\max &z=(\gamma + c T)\Tt  \bu^0 + \sum_{i=1}^N \left(\gamma+ \left(T - \frac{t_i + t_{i-1}}{2}\right) c \right)\Tt \left(t_i - t_{i-1}\right) u^i  + \gamma\Tt \bu^N    \nonumber   \\
  &\mbox{s.t.} &  \displaystyle A \bu^0 + x^0 = \beta   \nonumber \\
 \mbox{dCLP$_1$} &\mbox{} & \displaystyle A \bu^0 + A \sum_{i=1}^n \left(t_i - t_{i-1}\right) u^i + x^n = \beta + b t_n \quad \text{for } n=1,\dots,N    \\
 &\mbox{} &  \displaystyle A \bu^0 + A \sum_{i=1}^N \left(t_i - t_{i-1}\right) u^i + A \bu^N + \bx^N  = \beta + b T   \nonumber \\
&& \quad \bu^0,\, u^1,\dots,u^N, \bu^N, \,x^0, x^1,\dots,x^N, \bx^N \ge 0.  \nonumber
\end{eqnarray}
and for the same time partition the discretization of M-CLP$^*$ is defined as:
\begin{eqnarray}
\label{eqn.dCLP2}
 &\min &z=(\beta\Tt +Tb\Tt) \bp^N  + \sum_{i=1}^N \left(\beta\Tt +  \frac{t_{i} + t_{i-1}}{2} b\Tt\right) \left(t_i - t_{i-1}\right) p^i  +  \beta\Tt  \bp^0   \nonumber  \\
 &\mbox{s.t.} & \displaystyle A\Tt \bp^N - q^N = \gamma     \\
 \mbox{dCLP$_2$}   &\mbox{} & \displaystyle A\Tt \bp^N +  A\Tt \sum_{i=n}^N \left(t_i - t_{i-1}\right) p^i 
   - q^{n-1} = \gamma + c (T - t_{n-1}) \quad \text{for } n=N,\dots,1 \nonumber  \\
 &\mbox{} &    \displaystyle A\Tt \bp^N  + A\Tt \sum_{i=1}^N \left(t_i - t_{i-1}\right) p^i + A\Tt \bp^0 - \bq^0  = \gamma + c T  \nonumber \\
 && \quad \bp^N,\, p^N,\dots,p^1, \bp^0, \,q^N, \dots,q^1, q^0, \bq^0 \ge 0.  \nonumber
\end{eqnarray}
Note that these two problems are not dual to each other.

Following Pullan \cite{pullan:93}, for a partition $0=t_0 < t_1 < \ldots < t_N=T$ and values  $f(t_0),\dots,$ $f(t_N)$ we define the \textit{piecewise linear extension}:
\[
f_L(t) = \left( \frac{t_i - t}{t_i - t_{i-1}} \right) f (t_{i-1}) +  \left( \frac{t - t_{i-1}}{t_i - t_{i-1}}\right) f (t_{i}) \quad \text{for } t \in [t_{i-1}, t_i)  \text{ for } i = 1,\dots,N.
\]
and the \textit{piecewise constant extension}:
\[
f_C(t)= f(t_i), \quad  t \in [ t_{i-1}, t_i ),\;i=1,\ldots N.
\]

The following proposition is an easy extension of Theorem \ref{the.feas}.
\begin{proposition}
\label{thm.discrete}
All discretizations dCLP$_1$ (\ref{eqn.dCLP1}) are feasible if and only if M-CLP is feasible.
\end{proposition}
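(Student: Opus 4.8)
The plan is to piggyback on Theorem~\ref{the.feas}, which already equates feasibility of M-CLP with feasibility of Test-LP~(\ref{eqn.ptestLP}), and to observe that Test-LP is precisely the \emph{coarsest} discretization of M-CLP. Indeed, take the trivial partition $0=t_0<t_1=T$ (so $N=1$) in dCLP$_1$~(\ref{eqn.dCLP1}) and write $\bu=\bu^0$, $U=Tu^1$. The equality constraints then read $A\bu^0+x^0=\beta$ and $A\bu^0+ATu^1+x^1=\beta+bT$, with the separate final constraint made redundant by choosing $\bu^N=0$ and $\bx^N=x^1$. These are exactly the constraints of Test-LP, so the feasible region of the $N=1$ discretization is (the projection of) that of Test-LP; in particular one is feasible iff the other is. Note that we only need the constraint sets to match, not the objectives, since the proposition concerns feasibility alone.

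For the direction \emph{all dCLP$_1$ feasible $\Rightarrow$ M-CLP feasible} I would simply specialise to this trivial partition: by hypothesis every discretization is feasible, hence so is the $N=1$ one, hence Test-LP is feasible, and Theorem~\ref{the.feas} delivers feasibility of M-CLP.

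For the converse, \emph{M-CLP feasible $\Rightarrow$ all dCLP$_1$ feasible}, I would start from a feasible $U(t)$ of~(\ref{eqn.mpclp}) with slack $x(t)=\beta+bt-AU(t)\ge0$, fix an arbitrary partition, and sample $U$ at the breakpoints. Concretely, set $\bu^0=U(0)$ and $(t_i-t_{i-1})u^i=U(t_i)-U(t_{i-1})$ for $i=1,\dots,N-1$, absorb the last open increment into $(t_N-t_{N-1})u^N=U(T-)-U(t_{N-1})$ and the terminal jump into $\bu^N=U(T)-U(T-)$, and take the discrete slacks to be the sampled continuous slacks $x^n=x(t_n)$ for $n<N$, $x^N=\beta+bT-AU(T-)$, $\bx^N=x(T)$. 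Monotonicity of $U$ gives $\bu^0,u^i,\bu^N\ge0$, feasibility of M-CLP (together with the left limit of the constraint at $T$) gives all slacks nonnegative, and a direct substitution verifies that the telescoping partial sums reproduce $U(t_n)$ for $n<N$ and $U(T-)$, $U(T)$ at the last two constraints; hence each dCLP$_1$ is feasible.

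The argument has no genuine obstacle; the only point demanding care is the bookkeeping of the two boundary impulse variables $\bu^0,\bu^N$ and of the two distinct terminal slacks $x^N$ and $\bx^N$ appearing in~(\ref{eqn.dCLP1}). One must split the final increment of $U$ into the interior part $U(T-)-U(t_{N-1})$, carried by $u^N$ and matched against the $n=N$ constraint through $AU(T-)$, and the jump $U(T)-U(T-)$, carried by $\bu^N$ and matched against the separate final constraint through $AU(T)$. A slightly more robust alternative for the direction \emph{all dCLP$_1$ feasible $\Rightarrow$ M-CLP feasible}, which avoids singling out $N=1$, is to take any feasible dCLP$_1$ solution and form the piecewise linear extension $U_L$ of its sampled partial sums: since $AU_L(t)-\beta-bt$ is affine on each subinterval and nonpositive at both endpoints, it is nonpositive throughout, so $U_L$ is directly feasible for M-CLP.
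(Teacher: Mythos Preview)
Your proof is correct and tracks the paper's argument closely. For the direction M-CLP feasible $\Rightarrow$ dCLP$_1$ feasible, both you and the paper sample $U$ at the partition points; the only cosmetic difference is that the paper takes left limits $U(t_n-)$ and $x^n=x(t_n-)$ at interior nodes whereas you take right-continuous values $U(t_n)$ and $x^n=x(t_n)$, and either convention works. For the converse direction the paper goes directly via the piecewise linear extension (your ``more robust alternative''), while your primary route---specialising to $N=1$, identifying that with Test-LP, and invoking Theorem~\ref{the.feas}---is a legitimate shortcut that the paper itself hints at when it calls the proposition ``an easy extension of Theorem~\ref{the.feas}.'' Both routes yield the same conclusion with the same amount of work.
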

\begin{proof}
(i) Let $U(t)$ be a feasible solution of M-CLP (\ref{eqn.mpclp}) with slacks $x(t)\ge 0$. Then $\bu^0=U(0),\; u^n = \frac{1}{t_n -t_{n-1}}\int_{t_{n-1}}^{t_n}  dU(t)$ 
(in these integrals we take $t_0=0$ and $t_n = t_n-$ for $n=1,\dots,N$),  $\bu^N=U(T)-U(T-)$, 
and $x^0=x(0), \; x^n=x(t_n-),\,n=1,\ldots,N,\;  \bx^N=x(T)$ is a feasible solution for dCLP$_1$ (\ref{eqn.dCLP1}). To check this we have for $n=0,\ldots,N$:
\begin{eqnarray*} 
&& A \bu^0 + A \sum_{i=1}^n \left(t_i - t_{i-1}\right) u^i + x^n  =  \\
&& A  U(0) + A \sum_{i=1}^n \left(t_i - t_{i-1}\right) \int_{t_{i-1}}^{t_i}\frac{1}{t_i -t_{i-1}} dU(t) +x(t_n-)  = A U(t_n-) + x(t_n-) =\beta + bt_n
\end{eqnarray*}

(ii) Let $\bu^0,\, u^1,\dots,u^N, \bu^N$ be a feasible solution of dCLP$_1$.  Define $U(0)=\bu^0$, let $u(t)$ be the piecewise constant extension of $u_1,\ldots,u_N$, and let $U(t)=U(0)+\int_0^t u(s) ds,\,t\in [0,T)$,  $U(T)=U(T-)+\bu^N$.  Then $U(t)$ is a feasible solution of M-CLP.
\qquad\end{proof}
\begin{proposition}
\label{thm.feas-ext}
Any feasible solution of dCLP$_1$ can be extended to a feasible solution of M-CLP with equal objective value.
\end{proposition}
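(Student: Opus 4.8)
The plan is to take the natural extension already used in the necessity direction of Proposition \ref{thm.discrete}, confirm it is feasible, and then observe that the midpoint weights appearing in the objective of dCLP$_1$ were chosen precisely so that the discrete objective equals the continuous Lebesgue--Stieltjes objective of the extension, with no approximation error. Concretely, given a feasible $\bu^0, u^1,\dots,u^N, \bu^N$ of dCLP$_1$, I would set $U(0)=\bu^0$, let $u(t)$ be the piecewise constant extension of $u^1,\dots,u^N$, put $U(t)=\bu^0+\int_0^t u(s)\,ds$ for $t\in[0,T)$, and $U(T)=U(T-)+\bu^N$. This $U$ is non-negative, non-decreasing and right continuous, with an atom $\bu^0$ at $0$, an absolutely continuous part of density $u(t)$ on $(0,T)$, and an atom $\bu^N$ at $T$.

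First I would verify feasibility. On each interval $[t_{n-1},t_n)$ the map $t\mapsto A U(t)$ is affine, interpolating the grid values $A U(t_{n-1})=\beta+bt_{n-1}-x^{n-1}$ and $A U(t_n-)=\beta+bt_n-x^n$ read off from two consecutive equality constraints of dCLP$_1$. Hence the slack $\beta+bt-A U(t)$ equals the piecewise-linear extension $x_L(t)$ of the discrete slacks on $[0,T)$, which is non-negative because each $x^{n-1},x^n\ge0$ and interpolation preserves non-negativity; at $T$ the final constraint gives slack $\bx^N\ge0$. Thus $U$ is a feasible solution of M-CLP, recovering the feasibility half of Proposition \ref{thm.discrete}.

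The crux is the objective computation. I would split $\int_{0-}^T(\gamma+(T-t)c)\Tt dU(t)$ into its three components. The atom at $0$ contributes $(\gamma+cT)\Tt\bu^0$ and the atom at $T$ contributes $\gamma\Tt\bu^N$, matching the first and last terms of the dCLP$_1$ objective. For the absolutely continuous part I would evaluate $\int_{t_{i-1}}^{t_i}(\gamma+(T-t)c)\Tt u^i\,dt$ directly: since the integrand is affine in $t$, the integral equals its midpoint value times the interval length, namely $(t_i-t_{i-1})\big(\gamma+(T-\frac{t_i+t_{i-1}}{2})c\big)\Tt u^i$, which is exactly the $i$-th summand of the dCLP$_1$ objective. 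Summing over $i$ yields equality of the two objective values.

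The main obstacle is conceptual rather than computational: one must recognize that the coefficient $T-\frac{t_i+t_{i-1}}{2}$ is not a quadrature approximation but makes the midpoint rule \emph{exact} for the affine cost $\gamma+(T-t)c$, so that the discretization introduces no error and the objectives coincide exactly rather than merely in the limit. Everything else is bookkeeping, matching the atoms at the endpoints to the evaluations of the cost coefficient at $t=0$ and $t=T$.
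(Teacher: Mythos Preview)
Your proposal is correct and follows exactly the same construction as the paper: the paper defines the same piecewise-constant density extension with impulses at $0$ and $T$, identifies the slack as the piecewise-linear extension of $x^0,\ldots,x^N$, and then asserts feasibility and equality of objectives as ``immediate.'' You have simply supplied the details the paper omits, in particular the midpoint-rule exactness for the affine cost, so there is nothing to add or correct.
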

\begin{proof}
We set $u(t)$ to be the piecewise constant extension of $u^1,\dots,u^N$ and take $U(t)$ to be the measure with density $u(t)$ on $(0,T)$ and impulses $U(\{0\}) = \bu^0, U(\{T\})=\bu^N$. We also set $x(t)$ to be the piecewise linear extension of $x^0,\dots,x^N$, and take $\bx^N$ to be the same for both problems. It is immediate to see that this gives a feasible solution to M-CLP.  Furthermore, it is immediate to see that the objective of dCLP$_1$ equals the objective of M-CLP for this extended solution. 
\qquad\end{proof}

\begin{proposition}
\label{thm.dclp-opt}
 The optimal values $V$ of the various problems satisfy:
\[
V(dCLP_1) \le V(M\text{-}CLP) \le V(M\text{-}CLP^*) \le V(dCLP_2) 
\] 
\end{proposition}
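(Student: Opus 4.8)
The plan is to establish the three inequalities in the chain separately. The middle inequality is precisely weak duality, already available, while the two outer inequalities are direct consequences of the extension results proved above, once we are careful about whether each problem is a maximization or a minimization.

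First I would dispose of the middle inequality $V(M\text{-}CLP) \le V(M\text{-}CLP^*)$. This is an immediate restatement of Proposition \ref{thm:weakduality}: every feasible pair $(U(t),P(t))$ satisfies Primal objective $\le$ Dual objective, so the supremum of the M-CLP objective over its feasible region cannot exceed the infimum of the M-CLP$^*$ objective over its feasible region, which is exactly $V(M\text{-}CLP) \le V(M\text{-}CLP^*)$.

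Next I would treat the left inequality $V(dCLP_1) \le V(M\text{-}CLP)$, both being maximization problems. By Proposition \ref{thm.feas-ext}, every feasible solution of dCLP$_1$ extends to a feasible solution of M-CLP with the same objective value. Hence the M-CLP objective attains every value attained by dCLP$_1$ at a feasible point, and taking suprema over the respective feasible regions yields $V(dCLP_1) \le V(M\text{-}CLP)$. For the right inequality $V(M\text{-}CLP^*) \le V(dCLP_2)$ I would argue by symmetry: as noted after the complementary slackness condition, all the results for M-CLP carry over to M-CLP$^*$ with the obvious modifications, and in particular the symmetric analog of Proposition \ref{thm.feas-ext} shows that every feasible solution of dCLP$_2$ extends to a feasible solution of M-CLP$^*$ with equal objective. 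Since both problems are minimizations, the infimum over M-CLP$^*$ is no larger than the infimum over dCLP$_2$, giving $V(M\text{-}CLP^*) \le V(dCLP_2)$.

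There is no genuine obstacle in this argument; the whole statement rests on the already-established extension lemma together with weak duality. The only points requiring care are to track the direction of each inequality according to the max/min orientation of the problem, and to confirm that the extension of Proposition \ref{thm.feas-ext} indeed applies to the dCLP$_2$/M-CLP$^*$ side through the stated symmetry. Implicitly one also assumes feasibility of the problems so that the optimal values are well defined, which holds throughout this section by Proposition \ref{thm.discrete}, with the inequalities read under the usual conventions for suprema and infima should any value be infinite.
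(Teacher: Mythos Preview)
Your proof is correct and follows essentially the same approach as the paper: the middle inequality is weak duality (Proposition~\ref{thm:weakduality}), and the two outer inequalities come from Proposition~\ref{thm.feas-ext} applied to the primal and, by symmetry, to the dual. The paper's version is simply more terse.
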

\begin{proof}
The first and last inequalities follow from Proposition \ref{thm.feas-ext}  and the middle inequality follows from weak duality.
\qquad\end{proof}


\subsection{Discretizations with equidistant partitions}

Similar to Wang, Zhang and Yao \cite{wang-zang-yao:09} and to Pullan \cite{pullan:93}  we use \textit{even equidistant} partitions, denoted $\pi^N$ which divides the interval $[0, T]$ into $N$ equal segments, each of length $2\epsilon$, i.e. $\epsilon = \frac{T}{2N}$. 
With this partition we introduce the  notations:
\begin{itemize}
\item
Given a $K \times J$ matrix $A$ we define the $N K \times  J$ matrix $A_\|$  ,
the $N K \times N J$ matrix $A_\blacktriangle$, 
 and the $ K \times NJ$ matrix $A_=$ as follows:
\[
A_\blacktriangle=\left[ \begin{array}{cccc} 
A &  &  &  \\
A & A & &  \\
\dots & & & \\
A & A & \dots & A \\
\end{array} \right], \quad  
A_\|=\left[ \begin{array}{c} 
A   \\
A   \\
\dots \\
A  \\
\end{array} \right],
\quad
A_= = \left[ A \quad A \quad \dots \quad A \right].
\]
\item
We define the $N$-fold vectors, each  with $N$ vector components:
\end{itemize}
\[
\hbeta =  \left[ \begin{array}{c} 
\beta \\
 \vdots \\
 \beta 
 \end{array}\right]
 , \quad
\hgamma =  \left[ \begin{array}{c} 
\gamma \\
 \vdots \\
 \gamma
 \end{array}\right]
\]
\[
\hb_1=\left[ \begin{array}{c}
2b\epsilon \\
4b\epsilon \\
\dots \\
b T
\end{array}\right] \quad \hb_2=\left[ \begin{array}{c}
b\epsilon \\
3b\epsilon \\
\dots \\
b (T -\epsilon)
\end{array}\right]  \quad \hc_1=\left[ \begin{array}{c}
c (T -\epsilon) \\
c (T -3\epsilon)  \\
\dots \\
c\epsilon
 \end{array}\right]  \quad \hc_2=\left[ \begin{array}{c}
c T  \\
c (T- 2\epsilon) \\
\dots \\
2 c\epsilon
\end{array}\right] 
\]
\[
\Delta U = \left[ \begin{array}{c}
 \Delta U^1 \\
 \vdots \\
\Delta U^N 
 \end{array}\right] = \left[ \begin{array}{c}
2 u^1 \epsilon \\
 \vdots \\
2 u^N \epsilon
 \end{array}\right], 
 \Delta P  = \left[ \begin{array}{c}
 \Delta P^1 \\
 \vdots \\
\Delta P^N 
 \end{array}\right] = \left[ \begin{array}{c}
2 p^1 \epsilon \\
 \vdots \\
2 p^N \epsilon
 \end{array}\right], 
 \hx = \left[ \begin{array}{c}
x^1  \\ \vdots \\  x^N \end{array}\right],
 \hq  = \left[ \begin{array}{c}
q^0  \\ \vdots \\ q^{N-1}  \end{array}\right]
\]

Using this notation we rewrite problems (\ref{eqn.dCLP1}), (\ref{eqn.dCLP2}) for even equidistant partitions, as:
\begin{eqnarray}
\label{eqn.dCLPpi1}
 &\max &z =\displaystyle (\gamma + c T)\Tt  \bu^0 + \left(\hgamma +\hc_1\right)\Tt \Delta U + \gamma\Tt \bu^N    \nonumber  \hspace{0.6in} \\
  &\mbox{s.t.} &  \displaystyle A \bu^0 + x^0 = \beta   \nonumber  \\
 \mbox{dCLP$_1(\pi^N)$} &\mbox{} & \displaystyle A  _\| \bu^0 + A_\blacktriangle \Delta U + \hx = \hbeta + \hb_1  \\
 &\mbox{} &  \displaystyle A \bu^0 + A_= \Delta U+ A \bu^N + \bx^N  = \beta + b T  \nonumber \\
&& \quad \bu^0,\, \Delta U, \bu^N, \,x^0,\hx, \bx^N \ge 0.  \nonumber
\end{eqnarray}
\begin{eqnarray}
\label{eqn.dCLPpi2}
 &\min &z =\displaystyle (\beta + b T)\Tt  \bp^N + \left(\hbeta + \hb_2 \right)\Tt\Delta P  + \beta\Tt \bp^0      \nonumber  \hspace{0.6in}  \\
 &\mbox{s.t.} &  \displaystyle  A\Tt \bp^N - q^N = \gamma  \nonumber  \\
 \mbox{dCLP$_2(\pi^N)$}  &\mbox{} & \displaystyle  A\Tt  _\| \bp^N + A_\blacktriangle\Tt \Delta P  - \hq = \hgamma + \hc_2  \\
 &\mbox{} &   \displaystyle  A\Tt \bp^N +  A\Tt _= \Delta P + A\Tt \bp^0 - \bq^0  = \gamma + c T \nonumber  \\
&& \quad \bp^N,\, \Delta P, \bp^0, \,q^N,  \hq, \bq^0 \ge 0.  \nonumber
\end{eqnarray}
The reader may notice that in (\ref{eqn.dCLPpi2}) we have for convenience reversed the order of variables and the order of the constraints in the middle part of the problem relative to  (\ref{eqn.dCLP2})

To quantify the discretization error for time partition $\pi^N$ we define a modified pair of problems mdCLP$(\pi^N)$, mdCLP$^*(\pi^N)$:  
\begin{eqnarray} \mbox{mdCLP$(\pi^N)$}  & \max & z = (\gamma + c T)\Tt  \bu^0 + \left(\hgamma +\hc_2 \right)\Tt \Delta U + \gamma\Tt \bu^N \nonumber \\
&\mbox{s.t.} & \text{Constraints of (\ref{eqn.dCLPpi1})} \nonumber
\end{eqnarray}
\begin{eqnarray} \mbox{mdCLP$^*(\pi^N)$} & \min & z = (\beta + b T)\Tt  \bp^N + \left(\hbeta + \hb_1 \right)\Tt\Delta P  + \beta\Tt \bp^0 \nonumber \\
& \mbox{s.t.} & \text{Constraints of (\ref{eqn.dCLPpi2})} \nonumber
\end{eqnarray}
We note that they are dual to each other. They are both feasible if (\ref{eqn.dCLPpi1}), (\ref{eqn.dCLPpi2}) are feasible.
Moreover, since (\ref{eqn.dCLPpi1}), (\ref{eqn.dCLPpi2}) are (\ref{eqn.dCLP1}), (\ref{eqn.dCLP2}) rewritten, then problems mdCLP$(\pi^N)$ and mdCLP$^*(\pi^N)$ are feasible if and only if M-CLP, M-CLP$^*$ are feasible, by Proposition \ref{thm.discrete}. In this case  mdCLP$(\pi^N)$ and mdCLP$^*(\pi^N)$  also posses optimal solutions.  Denote by $\bu^{0*},  \Delta U^*,$ $ \bu^{N*}$ and $\bp^{N*},  \Delta P^*, \bp^{0*}$ an optimal solution of mdCLP$(\pi^N)$  and mdCLP$^*(\pi^N)$.
 
\begin{proposition}
\label{the.gapbound}
If M-CLP and M-CLP$^*$ are feasible then by solving mdCLP$(\pi^N)$ and mdCLP$^*(\pi^N)$ the following bounds holds:
\[
V(\mbox{M-CLP}^*) - V(\mbox{M-CLP}) \le 
V(\mbox{dCLP}_2(\pi^N)) - V(\mbox{dCLP}_1(\pi^N))
\le \Upsilon(N) \epsilon 
\]
where
\[
\Upsilon(N) = c\Tt \sum_{i=1}^N\Delta U^{*i} - b\Tt\sum_{i=1}^N \Delta P^{*i} > 0
\]
\end{proposition}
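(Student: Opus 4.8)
The first inequality needs no new work: it is exactly the chain of Proposition~\ref{thm.dclp-opt}, since $V(\mbox{dCLP}_1(\pi^N)) \le V(\mbox{M-CLP}) \le V(\mbox{M-CLP}^*) \le V(\mbox{dCLP}_2(\pi^N))$ rearranges to $V(\mbox{M-CLP}^*) - V(\mbox{M-CLP}) \le V(\mbox{dCLP}_2(\pi^N)) - V(\mbox{dCLP}_1(\pi^N))$. So the real content is the second inequality, and the plan is to route the comparison of the two \emph{honest} discretizations through the modified pair mdCLP$(\pi^N)$, mdCLP$^*(\pi^N)$. The point of the modified pair is that, unlike dCLP$_1(\pi^N)$ and dCLP$_2(\pi^N)$, it \emph{is} a genuine primal--dual LP pair, so finite-dimensional LP strong duality applies and gives a single common optimal value, which I will write as $V_m := V(\mbox{mdCLP}(\pi^N)) = V(\mbox{mdCLP}^*(\pi^N))$.

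The one computation I would do first is the bookkeeping of how the four objectives differ. Problems dCLP$_1(\pi^N)$ and mdCLP$(\pi^N)$ have \emph{identical} constraints and costs differing only in the coefficient of $\Delta U$, namely $\hc_1$ against $\hc_2$; reading off the blocks, $\hc_2 - \hc_1$ is block-constant and equal to $c\epsilon$, so for every feasible $(\bu^0,\Delta U,\bu^N)$ the objective of mdCLP$(\pi^N)$ exceeds that of dCLP$_1(\pi^N)$ by exactly $\epsilon\, c\Tt \sum_{i=1}^N \Delta U^i$. Symmetrically, dCLP$_2(\pi^N)$ and mdCLP$^*(\pi^N)$ share constraints and differ only in the coefficient of $\Delta P$, with $\hb_1 - \hb_2$ block-constant equal to $b\epsilon$, so the objective of mdCLP$^*(\pi^N)$ exceeds that of dCLP$_2(\pi^N)$ by $\epsilon\, b\Tt \sum_{i=1}^N \Delta P^i$ at every feasible dual point.

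I would then evaluate each honest discretization at the optimizer of its modified partner. The mdCLP$(\pi^N)$ optimizer $(\bu^{0*},\Delta U^*,\bu^{N*})$ is feasible for dCLP$_1(\pi^N)$, and because the latter is a maximization, evaluating its objective there gives the lower bound $V(\mbox{dCLP}_1(\pi^N)) \ge V_m - \epsilon\, c\Tt \sum_i \Delta U^{*i}$. Dually, the mdCLP$^*(\pi^N)$ optimizer is feasible for the minimization dCLP$_2(\pi^N)$, giving the upper bound $V(\mbox{dCLP}_2(\pi^N)) \le V_m - \epsilon\, b\Tt \sum_i \Delta P^{*i}$. Subtracting the two and cancelling $V_m$ produces $V(\mbox{dCLP}_2(\pi^N)) - V(\mbox{dCLP}_1(\pi^N)) \le \epsilon\big( c\Tt \sum_i \Delta U^{*i} - b\Tt \sum_i \Delta P^{*i} \big) = \Upsilon(N)\epsilon$, which is the second inequality. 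For the sign of $\Upsilon(N)$ I would simply note that the left-hand side here is already nonnegative by the chain of Proposition~\ref{thm.dclp-opt}, whence $\Upsilon(N)\epsilon \ge 0$ and $\Upsilon(N) \ge 0$, with strict positivity whenever the discretization gap $V(\mbox{dCLP}_2(\pi^N)) - V(\mbox{dCLP}_1(\pi^N))$ is nonzero.

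The step I expect to be the crux is the correct \emph{use} of duality: one must apply LP strong duality to the dual pair mdCLP$(\pi^N)$/mdCLP$^*(\pi^N)$ rather than to dCLP$_1$/dCLP$_2$ (which are purposely not dual to each other), and then plug the optimizer of each modified problem into the objective of the \emph{opposite} honest discretization, keeping track that the maximization yields a lower bound and the minimization an upper bound, so that the two residual terms combine with the correct signs into $\Upsilon(N)$. Everything else is the blockwise identities $\hc_2 - \hc_1 = c\epsilon$ and $\hb_1 - \hb_2 = b\epsilon$, and these are precisely where the midpoint placement of the cost coefficients in the objectives is exploited.
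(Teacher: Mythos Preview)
Your proof is correct and follows essentially the same route as the paper: the first inequality is Proposition~\ref{thm.dclp-opt}; for the second you exploit that mdCLP$(\pi^N)$ and mdCLP$^*(\pi^N)$ form a genuine LP dual pair with common optimal value, plug each optimizer into the corresponding honest discretization, and read off the residual via the blockwise identities $\hc_2-\hc_1=c\epsilon$ and $\hb_1-\hb_2=b\epsilon$. Your remark on the sign of $\Upsilon(N)$ is in fact more careful than the paper's, which asserts $\Upsilon(N)>0$ but, like you, only derives $\Upsilon(N)\ge 0$ from the argument.
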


{\em Proof}.
 The first inequality follows from Proposition \ref{thm.dclp-opt}. To evaluate the second inequality we note that the optimal solutions of mdCLP$(\pi^N)$  and  mdCLP$^*(\pi^N)$ are feasible but suboptimal solutions of  dCLP$_1(\pi^N)$ and dCLP$_2(\pi^N)$.  Calculating the objective values of dCLP$_1(\pi^N)$, dCLP$_2(\pi^N)$ for the solutions $\bu^{0*},  \Delta U^*,$ $ \bu^{N*}$,  $\bp^{N*},  \Delta P^*, \bp^{0*}$  we have:
\begin{eqnarray}
\label{eqn.dopt1}
& V(dCLP_1(\pi^N)) \ge  (\gamma + c T)\Tt  \bu^{0*} + \big(\hgamma +\hc_1 \big)\Tt \Delta U^* + \gamma\Tt \bu^{N*}, \nonumber \\
& V(dCLP_2(\pi^N)) \le (\beta + b T)\Tt  \bp^{N*} + \big(\hbeta + \hb_2 \big)\Tt\Delta P^*  + \beta\Tt \bp^{0*}
\end{eqnarray}

On the other hand, because mdCLP$(\pi^N)$  and mdCLP$^*(\pi^N)$ are dual problems:
\begin{eqnarray}
\label{eqn.dopt2}
 & V(\text{mdCLP}(\pi^N)) = (\gamma + c T)\Tt  \bu^{0*} + \big(\hgamma +\hc_2 \big)\Tt \Delta U^* + \gamma\Tt \bu^{N*} = \nonumber \\
 & = (\beta + b T)\Tt  \bp^{N*} + \big(\hbeta + \hb_1 \big)\Tt\Delta P^*  + \beta\Tt \bp^{0*}  = V(\text{mdCLP}^*(\pi^N)) 
\end{eqnarray}

Combining (\ref{eqn.dopt1}) and (\ref{eqn.dopt2}), after easy manipulations we get:
\begin{equation*}
 V(dCLP_2(\pi^N)) - V(dCLP_1(\pi^N)) \le  \big(\hb_2 \Tt - \hb_1 \Tt\big)  \Delta P^*  + \big(\hc_2 \Tt - \hc_1 \Tt\big) \Delta U^* = \epsilon \Upsilon(N) \qquad\endproof
\end{equation*}

\begin{proposition}
\label{thm.finlim}
The sequence of optimal values of the dual problems mdCLP$(\pi^N)$ and mdCLP$^*(\pi^N)$ has finite lower and upper bounds, $V_L,\,V_U$. 
\end{proposition}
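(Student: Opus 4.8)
The plan is to bound the common optimal value of the dual pair mdCLP$(\pi^N)$, mdCLP$^*(\pi^N)$ from below and above by evaluating their objectives at discretizations of \emph{fixed} feasible solutions of M-CLP and M-CLP$^*$, and then to show that these objective values stay bounded \emph{uniformly} in $N$. Since M-CLP and M-CLP$^*$ are feasible, the discretizations are feasible (by Proposition \ref{thm.discrete} and its M-CLP$^*$ analogue), so each individual pair has a finite common optimal value; the content of the proposition is the uniform finiteness of $V_L$ and $V_U$.

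For the lower bound, fix once and for all a feasible solution $U(t)$ of (\ref{eqn.mpclp}). As in the proof of Proposition \ref{thm.discrete}(i), setting $\bu^0=U(0)$, $\Delta U^i=\int_{t_{i-1}}^{t_i}dU(t)$ and $\bu^N=U(T)-U(T-)$ gives a feasible point of the constraints (\ref{eqn.dCLPpi1}), hence a feasible solution of mdCLP$(\pi^N)$. Since mdCLP$(\pi^N)$ is a maximization, its optimal value is at least the objective attained here, namely $(\gamma+cT)\Tt\bu^0+(\hgamma+\hc_2)\Tt\Delta U+\gamma\Tt\bu^N = (\gamma+cT)\Tt U(0)+\sum_{i=1}^N\big(\gamma+(T-t_{i-1})c\big)\Tt\int_{t_{i-1}}^{t_i}dU(t)+\gamma\Tt\big(U(T)-U(T-)\big)$, which is a Stieltjes sum for the fixed finite integral $\int_{0-}^T(\gamma+(T-t)c)\Tt dU(t)$. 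The upper bound is symmetric: fixing a feasible $P(t)$ of (\ref{eqn.mdclp}) and taking the corresponding increments yields a feasible solution of mdCLP$^*(\pi^N)$ whose objective is a Stieltjes sum against the increments of $P$; since mdCLP$(\pi^N)$ and mdCLP$^*(\pi^N)$ are a dual pair of linear programs, weak duality gives $V(\mbox{mdCLP}(\pi^N))\le V(\mbox{mdCLP}^*(\pi^N))\le$ (this feasible dual objective).

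The step requiring care is not the existence of the bounds but their \emph{uniformity} in $N$, i.e.\ that the Stieltjes sums of the fixed feasible solutions do not blow up as the mesh is refined. This is exactly where monotonicity is used: because $U$ is non-decreasing and finite on $[0,T]$, its increments satisfy $\sum_{i=1}^N\|\Delta U^i\|\le\|U(T)-U(0)\|$, a fixed finite number independent of $N$, while the coefficient vectors $\gamma+(T-t_{i-1})c$ are uniformly bounded by $\max_{0\le t\le T}\|\gamma+(T-t)c\|$ on the compact interval. Hence the primal discretized objective is bounded below by a finite constant $V_L$ independent of $N$, giving $V(\mbox{mdCLP}(\pi^N))\ge V_L$ for all $N$. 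The identical argument for $P$, using that $P$ is non-decreasing and finite and that $\beta+(T-t)b$ is bounded on $[0,T]$, produces a finite $V_U$ independent of $N$ with $V(\mbox{mdCLP}(\pi^N))=V(\mbox{mdCLP}^*(\pi^N))\le V_U$.

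Combining the two estimates, the sequence of optimal values lies in the fixed finite interval $[V_L,V_U]$, which is the assertion. I expect the only delicate bookkeeping to be matching the index ranges of $\hc_2$ (resp.\ $\hb_1$) to the sample points $t_{i-1}$ (resp.\ $t_i$) so as to identify the discrete objectives as Stieltjes sums; but for the boundedness claim this identification is not even needed, since it suffices that the discrete objectives are sums of bounded coefficients against the increments of a monotone, hence finite-variation, function.
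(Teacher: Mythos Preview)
Your argument is correct. Both your approach and the paper's prove boundedness by evaluating the mdCLP objectives at a fixed feasible point independent of $N$, but they differ in how that point is chosen.

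The paper works with the \emph{single-interval} discretization $\pi^1$: it takes an optimal solution $(\bu,U,\bu^T)$ of mdCLP$(\pi^1)$ (which exists because both mdCLP$(\pi^1)$ and its dual mdCLP$^*(\pi^1)$ are feasible), and extends it to a feasible solution of mdCLP$(\pi^N)$ by spreading $U$ uniformly, $\Delta U^i=U/N$. Because all the $\Delta U^i$ are equal, the cross term $\hc_2\Tt\Delta U$ collapses to $c\Tt(\tfrac{T}{2}+\epsilon)U$, and the paper obtains the explicit lower bound $V_L=(\gamma+cT)\Tt\bu+\gamma\Tt U+(c^+\tfrac{T}{2}-c^-T)\Tt U+\gamma\Tt\bu^T$ in terms of the $\pi^1$-solution alone; the dual argument gives the analogous explicit $V_U$.

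Your approach instead fixes an arbitrary feasible $U(t)$ of M-CLP and uses its increments on the $\pi^N$ mesh directly; the uniform bound then follows from the finite total variation $U(T)-U(0)$ and the boundedness of the continuous coefficient $\gamma+(T-t)c$ on $[0,T]$. This is more direct and avoids solving the auxiliary LP (\ref{eqn.pLP1}), at the cost of a less explicit constant. Either version of $V_L$ suffices for its downstream use in Proposition~\ref{thm.fseqbound}, where only the existence of some uniform lower bound is needed.
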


{\em Proof}.
We consider the  single interval  partition $\pi^1$, where we have the problem:
\begin{eqnarray}
\label{eqn.pLP1}
 &\max &z = (\gamma + c T)\Tt  \bu + (\gamma + c T)\Tt U + \gamma\Tt \bu^T    \nonumber
 \hspace{0.6in}   \\
 &\mbox{s.t.} &  A \bu + x^0 = \beta  \nonumber \\
 \mbox{mdCLP}(\pi^1)  &\mbox{} &  A \bu + A U + x^t = \beta + b T   \\
 &\mbox{} &  A \bu + A U + A \bu^T + x^T = \beta + b T  \nonumber \\
&& \quad \bu,\, \bu^T, U,\,x^0, x^t, x^T \ge 0.  \nonumber
\end{eqnarray}
An optimal solution to (\ref{eqn.pLP1}) can be extended to a feasible solution of mdCLP$(\pi^N)$ as follows: $\displaystyle \bu^0 = \bu,\; \bu^N = \bu^T,\;
\Delta U=\left[\frac{U}{N},\dots,\frac{U}{N}\right],\;  \hx=\left[ \left(1 - 2 \epsilon\right) x^0 + 2\epsilon x^t,\dots, 2 \epsilon x^0 +  \left(1 - 2 \epsilon\right) x^t,\, x^t \right],\;$ $ x^0=x^0,\; \bx^N = x^T$.
 Hence the following inequality holds:
\begin{eqnarray*}
 V( \mbox{mdCLP}(\pi^N)) &\ge& (\gamma+cT)\Tt \bu+\gamma\Tt U+\hc_2\Tt \Delta U+\gamma\Tt \bu^T  \\ 
& =&  (\gamma+cT)\Tt\bu+\gamma\Tt U+c\Tt \Big(\frac{T}{2}+\epsilon \Big)U+\gamma\Tt \bu^T \\
&\ge &  (\gamma+cT)\Tt\bu+\gamma\Tt U+\Big( c^+ \frac{T}{2}  - c^- T \Big)\Tt U+\gamma\Tt \bu^T
= V_L
\end{eqnarray*}
where $c^+_j=\max(c_j,0),\,c^-_j=\max(-c_j,0)$, and we recall that $\epsilon \le \frac{T}{2}$.

Similarly, by considering the dual, an upper bound is obtained in terms of the solution of the dual test problem:
\[
V_U = (\beta+b T)\Tt\bp+\beta\Tt P+\Big(b^+ T - b^- \frac{T}{2} \Big)\Tt P+\beta\Tt \bp^T \qquad\endproof
\]


\subsection{Bounding the discrete solutions}
In this section we assume that M-CLP as well as M-CLP$^*$ satisfy the Slater type condition \ref{def.slater}.
Under this assumption we will show that all the optimal solutions of mdCLP$(\pi^N)$ and mdCLP$^*(\pi^N)$ are uniformly bounded.

We consider first the  sequence of primal problems $\{\text{mdCLP}(\pi^N)\}_{N=1}^\infty$.  
 We use the following notations:
\begin{eqnarray*}
&& \{\bu^{0\,*(N)}, \Delta U^{*(N)}, \bu^{N\,*(N)}\}_{N=1}^\infty,  \mbox{ are the optimal solutions}, \\
&&  u^{*(N)}(t_i)=\Delta U^{i\,*(N)}/2 \epsilon,   \qquad i=1,\ldots,N, \\
&&  u^{*(N)}(t) \text{ is the piecewise constant extension of the } u^{*(N)}(t_1),\ldots,u^{*(N)}(t_N)\\
&&  U^{*(N)}(t) = \bu^{0\,*(N)}+\int_0^t u^{*(N)}(s) ds, \; t\in [0,T), \quad U^{*(N)}(T) = U^{*(N)}(T-) + \bu^{N\,*(N)}.
\end{eqnarray*}

\begin{proposition}
\label{thm.fseqbound}
If  M-CLP$^*$ is strictly feasible then all $J$ elements of $U^{*(N)}(T)$ have a uniform finite upper bound. 
\end{proposition}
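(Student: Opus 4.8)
Proposition \ref{thm.fseqbound} asserts a uniform (in $N$) upper bound on the accumulated primal mass $U^{*(N)}(T)=\bu^{0*}+\sum_{i}\Delta U^{i*}+\bu^{N*}$. The plan is to exploit the fact that mdCLP$(\pi^N)$ and mdCLP$^*(\pi^N)$ are genuinely dual linear programs, and to combine this with the Slater condition on M-CLP$^*$ so that every unit of primal mass is \emph{charged} a cost bounded away from zero. Concretely, for each $N$ I would exhibit a feasible (not optimal) solution of mdCLP$^*(\pi^N)$ whose constraint slacks $\bq^0,\hq,q^N$ are all uniformly $\ge\alpha>0$ and whose objective value $D^{(N)}$ stays uniformly bounded above; the linear-programming complementary-slackness identity then turns this slack lower bound into the desired mass bound.

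First I would construct the strictly feasible discrete dual point. By the symmetric form of Definition \ref{def.slater}, strict feasibility of M-CLP$^*$ (equivalently of Test-LP$^*$, via the symmetric Corollary \ref{cor.strfeas}) provides an affine feasible dual solution $P(t)=\bp+\frac{t}{T}P$ whose slack $A\Tt P(t)-\gamma-ct$ is a convex combination of its two endpoint values and hence $\ge\alpha$ for all $t\in[0,T]$. Sampling this $P$ on $\pi^N$ — taking $\bp^N$ as its impulse at dual time $0$, the constant density $P/T$ as the increments $\Delta P^i$, and $\bp^0=0$ — gives a point feasible for mdCLP$^*(\pi^N)$, since $P$ is non-decreasing so that $\Delta P^i\ge 0$. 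The key observation is that, because $P$ is affine, the discrete cumulative sums reproduce $A\Tt P$ \emph{exactly} at the grid times appearing in $\hc_2$ and at the two endpoints; hence $\bq^0,\hq,q^N$ equal the continuous residual $A\Tt P(\cdot)-\gamma-c(\cdot)$ at those times and are $\ge\alpha$ with no loss of order $\epsilon$. Moreover $D^{(N)}=(\beta+bT)\Tt\bp^N+(\hbeta+\hb_1)\Tt\Delta P+\beta\Tt\bp^0$ is a Riemann-type approximation of $\int_{0-}^{T}(\beta+(T-t)b)\Tt dP(t)$; as $\beta+(T-t)b$ is bounded on $[0,T]$ and $P$ is affine, $D^{(N)}\le C$ uniformly in $N$.

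Next I would invoke the duality identity for the optimal primal $\bu^{0*},\Delta U^*,\bu^{N*}$ of mdCLP$(\pi^N)$ against this feasible dual point. Writing both LPs in equality standard form, for any dual-feasible $w$ with slack vector $s\ge 0$ one has $D^{(N)}-V(\mbox{mdCLP}(\pi^N))=\langle y^*,s\rangle$, where $y^*\ge 0$ is the optimal primal vector. Restricting the inner product to the positions of $\bu^0,\Delta U,\bu^N$, whose dual slacks are precisely $\bq^0,\hq,q^N\ge\alpha$, and dropping the (non-negative) slack-variable positions, gives
\[
\alpha\,\mathbf{1}\Tt U^{*(N)}(T)\;\le\;(\bu^{0*})\Tt\bq^0+\Delta U^{*\Tt}\hq+(\bu^{N*})\Tt q^N\;\le\;D^{(N)}-V(\mbox{mdCLP}(\pi^N)).
\]
Since $V(\mbox{mdCLP}(\pi^N))\ge V_L$ by Proposition \ref{thm.finlim} and $D^{(N)}\le C$, this yields $\mathbf{1}\Tt U^{*(N)}(T)\le (C-V_L)/\alpha$, independent of $N$; as every component of $U^{*(N)}(T)$ is non-negative, each of the $J$ components is bounded by the same constant.

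The step I expect to be the main obstacle is the construction in the second paragraph: one must verify that the sampled $P$ genuinely lands on a feasible point of mdCLP$^*(\pi^N)$ with slacks uniformly $\ge\alpha$, which requires matching the midpoint/endpoint sampling conventions encoded in $\hc_2$ and $\hb_1$ and the backward running of dual time, while simultaneously controlling $D^{(N)}$ from above. Choosing the affine strictly feasible dual solution is what makes this clean, since it has no interior impulses and its cumulative matches $A\Tt P$ exactly on the grid. The duality bookkeeping of the third paragraph is then routine once the equality standard forms are written out and the dual slacks are correctly identified with $\bq^0,\hq,q^N$.
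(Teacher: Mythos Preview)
Your argument is correct and is considerably more direct than the paper's. The paper fixes a single coordinate $j$, introduces an auxiliary discrete LP dEBLP$(\pi^N)$ that maximizes $\frac{\alpha_1}{2}U_j(T)$ subject to the mdCLP$(\pi^N)$ constraints together with a lower bound $V_L$ on a $\delta$--perturbed objective (the perturbation $\delta$ is needed precisely to absorb the $\hc_1$ vs.\ $\hc_2$ shift), lifts this to a continuous problem EBCLP, forms an ad hoc dual EBCLP$^*$, and finally uses the strictly feasible $\tilde P$ to exhibit a feasible point of EBCLP$^*$; the bound $\Psi_j$ emerges at the end of a chain $\Psi_j^{(4)}\ge\Psi_j^{(3)}\ge\Psi_j^{(2)}\ge\Psi_j^{(1)}\ge U_j^{*(N)}(T)$ and has to be patched for finitely many small $N<N_0$. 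By contrast, you stay inside the already established pair mdCLP$(\pi^N)$/mdCLP$^*(\pi^N)$ and use nothing beyond the elementary LP identity $D-V=\langle y^*,s\rangle$ for an arbitrary dual-feasible point; the affine Test-LP$^*$ solution gives you uniform slacks $\ge\alpha$ at \emph{every} grid time (no $\delta$--perturbation and no $N_0$ are needed, since the affine cumulative reproduces $A\Tt P$ exactly on $\pi^N$), and Proposition~\ref{thm.finlim} supplies the lower bound $V_L$ on the primal value. Your route bounds $\mathbf 1\Tt U^{*(N)}(T)$ in one stroke rather than component by component, and the constant $(C-V_L)/\alpha$ is explicit.

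Two minor points of bookkeeping worth tightening in a final write-up: (i) when you say ``the constant density $P/T$ as the increments $\Delta P^i$'' you of course mean $\Delta P^i=2\epsilon\cdot P/T$ (the density is $P/T$, the increment is the density times the mesh width); (ii) when you drop the ``slack-variable positions'' from $\langle y^*,s\rangle$ you are discarding the non-negative terms $(x^{0*})\Tt\bp^0+(\hx^*)\Tt\Delta P+(\bx^{N*})\Tt\bp^N$, which indeed preserves the inequality in the right direction. Both are straightforward once stated.
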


{\em Proof}.
Take any $j=1,\ldots,J$, we will show that  $U_j^{*(N)}(T)$ is bounded by a constant $\Psi_j$ for all $N$.  Recall that $U^{*(N)}(t)$ are non decreasing, so this bound will hold for all $U_j^{*(N)}(t),\,t\in[0,T]$.

We choose $N_0$  large enough and corresponding $\epsilon$ small enough so that:
\[ 
\epsilon \le \frac{\alpha_1}{2} \implies \frac{T}{2N_0} \le \frac{\alpha_1}{2} \implies N_0 \ge \frac{T}{\alpha_1} 
\]
where $\alpha_1$ is a small constant, to be determined later.  We will find a uniform bound for 
$U^{*(N)}(T), N\ge N_0$.  

We use the following notation:
 \begin{eqnarray*}
&&
\delta=\left[ \begin{array}{l}\delta_1 \\ \vdots \\\delta_J \end{array} \right] \mbox{where } 
\delta_j =  \left\{ \begin{array}{ll} \frac{\alpha_1}{2}  & \mbox{if } c_j > 0 \\ 
0 & \mbox{if } c_j \le 0 \end{array} \right., \\
&& \hat{c} =\left[ \begin{array}{l} c \\ \vdots \\ c \end{array} \right] \mbox{and } 
\hat{\delta} =\left[ \begin{array}{l}\delta \\ \vdots \\\delta \end{array} \right]
\mbox{are the $N$-fold vectors of $c$'s and $\delta$'s}
\end{eqnarray*}

Consider the following discrete linear optimization problem, for a discrete error bound:
\begin{eqnarray}
\label{eqn.EBLP1pi}
&\Psi^{(1)}_{j}(N) =    \max &  \frac{\alpha_1}{2} \bu_j^0 \; + \; 
\frac{\alpha_1}{2} \sum_{i=1}^N \Delta U_{j}^i \; + \; \frac{\alpha_1}{2} \bu_j^N \nonumber 
\\
 \mbox{dEBLP$(\pi^N)$} & \mbox{s.t.} &
   (\gamma + \delta + c T)\Tt  \bu^0 + (\hgamma + \hat{\delta} + \hc_1  )\Tt \Delta U + (\gamma + \delta)\Tt \bu^N \ge V_L   \hspace{0.4in} \\
 & & \text{Constraints of mdCLP}(\pi^N) \nonumber
\end{eqnarray}
One can see that $\hc_1 + \hat{\delta} \ge \hc_1 + \hat{c} \epsilon = \hc_2$ and hence, by Proposition \ref{thm.finlim} the first constraint of dEBLP$(\pi^N)$ holds for the solution $\{\bu^{0\,*(N)}, \Delta U^{*(N)}, \bu^{N\,*(N)}\}$.  Hence the optimal solution of 
mdCLP$(\pi^N)$ is feasible for dEBLP$(\pi^N)$.  
In particular, it follows that $\Psi^{(1)}_{j}(N) \ge U_j^{*(N)}(T)$

The problem dEBLP$(\pi^N)$ is a discretization of the following continuous linear programming problem:
\begin{eqnarray}
\label{eqn.EBCLP1pi}
&\Psi^{(2)}_{j} =    \max &  \frac{\alpha_1}{2} \int_0^T d  U_{j}(t)dt  \nonumber \\
 \mbox{EBCLP} & \mbox{s.t.} &  \int_0^T \left(\gamma +\delta + (T-t)c \right)\Tt  d U(t)   \ge V_L  \hspace{1.5in}  \\
 & & \text{Constraints of M-CLP} \nonumber
\end{eqnarray}
The continuous linear program EBCLP  is not formulated exactly as an M-CLP problem, 
the difference being that the first constraint has linearly varying coefficients rather than constant coefficients.
Nevertheless, one can show by  similar arguments that propositions \ref{thm.discrete}, \ref{thm.feas-ext}  still hold, and so for every $N$, $U^{*(N)}(t)$ is a feasible solution of EBCLP.  We now have that $\Psi^{(2)}_j \ge  \Psi^{(1)}_j $.

EBCLP is obviously feasible.  We now need to show that it is bounded.  We formulate the following dual problem to EBCLP:
 \begin{eqnarray}
\label{eqn.GEBCLP1pi*}
& \Psi^{(3)}_{j} =  \min &\displaystyle  \int_0^T \left(\beta + (T - t) b \right)\Tt  dP(t)  -  V_L P^O \nonumber   \\
  \mbox{EBCLP$^*$}  &\mbox{s.t.} & 
    A\Tt P(t) \ge  (\gamma +\delta + c t) P^O  +  \frac{\alpha_1}{2} \mathbf{e}^j, \quad 0\le t \le T, \\
&& P^O \ge 0,  P(t)\ge 0, \mbox{ non-decreasing and right continuous on }[0,T].
 \nonumber  
\end{eqnarray} 
where $\be^{j}$ is the $j$th unit vector.

It is straightforward to check that weak duality holds between  problems EBCLP and EBCLP$^*$.
Hence, if EBCLP$^*$ is feasible, we have $\Psi^{(3)}_j \ge \Psi^{(2)}_j $.

It remains to show that EBCLP$^*$ is feasible.  We now use the assumption that M-CLP$^*$ is strictly feasible.  Hence there exists a vector of functions $\tilde{P}(t),\,t\in[0,T]$ that satisfy:
\begin{eqnarray}
\label{eqn.GEBCLP1pi**}
 &&  A\Tt  \tilde{P}(t)  \ge \gamma + c t   \nonumber \\
 &&  A\Tt  \tilde{P}(t)  \ge \gamma + c t + \alpha_1,  \\
&& \quad \tilde{P}(t) \ge 0, \mbox{ non-decreasing and right continuous on }[0,T].
  \nonumber 
\end{eqnarray}
for some small enough value $\alpha_1$.  This gives us our choice for the value of $\alpha_1$. 

It is now easy to check that $P^O=1$ and $\tilde{P}(t),\,t\in [0,T]$ is a feasible solution of EBCLP$^*$, indeed, for $P^O=1$:
\[
A\Tt  \tilde{P}(t)  \ge \gamma + c t + \alpha_1 \ge 
(\gamma +\delta + c t) P^O  +  \frac{\alpha_1}{2} \mathbf{e}^j, \quad 0\le t \le T.
\]
Let $\Psi_j^{(4)}$ be the value of the objective of EBCLP$^*$ for this solution.  We have:
$\Psi^{(4)}_j \ge \Psi^{(3)}_j \ge \Psi^{(2)}_j \ge  \Psi^{(1)}_j \ge  U_j^{*(N)}(T),\,N \ge N_0$.

Finally:
\[
\Psi_j = \max\{\Psi^{(4)}_j, U_j^{* (N)}(T),\,N=1,\ldots,N_0\} \ge U_j^{* (N)}(T) \mbox{ for all $N$}. \qquad\endproof
\]

Let $P^{*(N)}(t)$ be defined for the optimal solutions of mdCLP$^*(\pi^N)$, similar to $U^{*(N)}$.
A similar proof shows that if M-CLP is strictly feasible, we can construct for any $k=1,\ldots,K$ a bound: $\Phi_k \ge P_k^{*(N)}(T)$.


\subsection{Strong duality}
\hspace*{1 cm}

\begin{theorem}
\label{thm.strongduality}
If M-CLP and M-CLP$^*$ are strictly feasible, then both have optimal solutions, and there is no duality gap.
\end{theorem}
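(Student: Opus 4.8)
The plan is to push the discretization gap to zero using Proposition~\ref{the.gapbound} together with the uniform bounds of the previous subsection, and then to recover optimal solutions as limits of the discrete optima via Helly's selection theorem.

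First I would settle the duality gap. Since $U^{*(N)}(t)$ is nondecreasing with $U^{*(N)}(0)=\bu^{0*}$, the partial sum $\sum_{i=1}^N \Delta U^{*i} = U^{*(N)}(T-)-\bu^{0*}$ is dominated componentwise by $U^{*(N)}(T)$, which by strict feasibility of M-CLP$^*$ and Proposition~\ref{thm.fseqbound} is bounded by $(\Psi_1,\dots,\Psi_J)$ uniformly in $N$; symmetrically, strict feasibility of M-CLP bounds $\sum_{i=1}^N \Delta P^{*i}$ by $(\Phi_1,\dots,\Phi_K)$. Hence $\Upsilon(N)=c\Tt\sum_{i=1}^N\Delta U^{*i}-b\Tt\sum_{i=1}^N\Delta P^{*i}$ is bounded above by a constant $C$ independent of $N$, and Proposition~\ref{the.gapbound} gives
\[
0 \le V(\mbox{M-CLP}^*)-V(\mbox{M-CLP}) \le \Upsilon(N)\,\epsilon \le C\,\frac{T}{2N}.
\]
Letting $N\to\infty$ yields $V(\mbox{M-CLP})=V(\mbox{M-CLP}^*)$, and these common values are finite by Propositions~\ref{thm.finlim} and~\ref{thm.dclp-opt}. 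Denote the common value by $V$; since Proposition~\ref{thm.dclp-opt} sandwiches $V(\mbox{dCLP}_1(\pi^N))\le V\le V(\mbox{dCLP}_2(\pi^N))$ while their difference tends to $0$, both $V(\mbox{dCLP}_1(\pi^N))$ and $V(\mbox{dCLP}_2(\pi^N))$ converge to $V$.

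Next I would construct an optimal M-CLP solution (M-CLP$^*$ being symmetric). By Proposition~\ref{thm.feas-ext} each optimal solution of mdCLP$(\pi^N)$ extends to a feasible solution $U^{*(N)}$ of M-CLP, whose M-CLP objective equals the dCLP$_1(\pi^N)$ objective of $(\bu^{0*},\Delta U^*,\bu^{N*})$, and therefore differs from $V(\mbox{mdCLP}(\pi^N))$ only by the discretization term $(\hc_2-\hc_1)\Tt\Delta U^*=\epsilon\,c\Tt\sum_{i=1}^N\Delta U^{*i}=O(\epsilon)$. Since $V(\mbox{mdCLP}(\pi^N))$ lies between $V(\mbox{dCLP}_1(\pi^N))$ and $V(\mbox{dCLP}_2(\pi^N))$ up to such $O(\epsilon)$ corrections, and both of these converge to $V$, the M-CLP objective of $U^{*(N)}$ converges to $V$. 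The functions $U^{*(N)}$ are nondecreasing and uniformly bounded by $(\Psi_1,\dots,\Psi_J)$, so Helly's selection theorem furnishes a subsequence converging at every continuity point of a nondecreasing limit $U$ of bounded variation, which I take right continuous with $U(0-)=0$.

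The crux is to show that this limit is M-CLP feasible and attains $V$. For feasibility I would pass to the limit in the constraint $A\,U^{*(N)}(t)\le\beta+bt$ at the (dense set of) continuity points of $U$, where $A\,U^{*(N_k)}(t)\to A\,U(t)$; right continuity of $U$ and continuity of $\beta+bt$ then extend the inequality to every $t\in[0,T]$, while the uniform bound controls the impulse at $T$. For the objective I would use that pointwise convergence of the nondecreasing distribution functions, together with convergence of their total masses and of the atoms at the endpoints $0$ and $T$ (which can be arranged along the subsequence), yields weak convergence of the measures $dU^{*(N_k)}\to dU$; since the integrand $\gamma+(T-t)c$ is continuous on $[0,T]$, the objective values converge, so $U$ attains the value $V$ and is therefore optimal. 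The main obstacle is exactly this measure-theoretic passage to the limit: guaranteeing that no mass escapes at the endpoints so that weak convergence captures the impulses at $0$ and $T$, and that the inequality constraints are preserved across the jump points of the limit.
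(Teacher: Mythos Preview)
Your proposal is correct and takes essentially the same approach as the paper. The paper dispatches your ``main obstacle'' by invoking the classical Helly theorems directly: Helly's selection principle yields a subsequence with pointwise convergence at \emph{every} $t\in[0,T]$ (not merely at continuity points), and Helly's convergence theorem for Stieltjes integrals against the continuous integrand $\gamma+(T-t)c$ then gives convergence of the objective, so no separate weak-convergence or endpoint-mass argument is required.
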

\begin{proof}
We show first that there is no duality gap.  In Proposition \ref{the.gapbound} we have seen that 
\[
V(\mbox{M-CLP}^*) - V(\mbox{M-CLP}) \le 
V(\mbox{dCLP}_2(\pi^N)) - V(\mbox{dCLP}_1(\pi^N))
\le \Upsilon(N) \epsilon 
\]
where
\[
\Upsilon(N) = c\Tt U^{*(N)}(T) - b\Tt P^{*(N)}(T)
\]
In Proposition \ref{thm.fseqbound} we saw that all components of $U^{*(N)}(T),\,P^{*(N)}(T)$ are uniformly bounded,   by quantities $\Psi_j,\,\Phi_k$.  We therefore have a uniform bound $\Upsilon$:
\[
\Upsilon(N) \le \Upsilon = \sum_{j=1}^J c_j^+ \Psi_j +  \sum_{k=1}^K b_k^- \Phi_k,
\]
where  $c^+_j=\max(c_j,0),\,b^-_k=\max(-b_k,0)$. Hence, 
\[
0\le V(\mbox{M-CLP}^*) - V(\mbox{M-CLP}) \le  \epsilon \Upsilon
\]
and letting $N\to \infty$, so that $\epsilon \to 0$, we get $V(\mbox{M-CLP}^*) = V(\mbox{M-CLP}) $.

We next show that optimal solutions exist.  We saw that $U^{*(N)}(t)$ are feasible solutions for M-CLP for all $N$.  $U^{*(N)}(t)$ are vectors of non-negative non-decreasing functions, and by Proposition \ref{thm.fseqbound} they are all uniformly bounded. 
By Helly's selection principle (Theorem 5, p. 372 in \cite{kolmogorov-fomin:75}), 
it is then possible to find a subsequence $N_m$ such that $U_j^{*(N_m)}(t)$ converge  pointwise for every $t$ to a non-negative non-decreasing right continuous function $U_j(t),\,t\in [0,T]$, for all $j=1,\ldots,J$.  It is immediate to see that $U(t)$ is a feasible solution for M-CLP. 

 By Helly's convergence theorem (Theorem 4, p. 370 in \cite{kolmogorov-fomin:75}) by the continuity of 
 $\gamma\Tt + c\Tt(T-t)$
\begin{equation} 
\label{eqn.helly2} 
\lim_{N_m \to \infty} \int_0^T (\gamma + (T-t)c)\Tt dU^{(N_m)}(t) = 
\int_0^T (\gamma + (T-t)c)\Tt dU(t),
\end{equation}  
but this limit equals $ V(\text{M-CLP})$, hence $U(t)$ is an optimal solution of M-CLP.
Similarly the dual problem M-CLP$^*$ has an optimal solution.
\qquad\end{proof}


\section{Form of optimal solution}
\label{sec.solform}��
We now consider problems M-CLP that  have an optimal primal solution $U_O(t)$. In particular, this is true if M-CLP are primal and dual strictly feasible (see Theorem \ref{thm.strongduality}). In this section we investigate properties of the optimal solution. 

\begin{proposition}
\label{thm.mclpcont}
$c\Tt U_{O}(t)$ is continuous on $(0, T)$.
\end{proposition}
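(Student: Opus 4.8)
The plan is to argue by contradiction via a local perturbation that shifts part of an impulse of $U_O$ slightly in time. Since $U_O$ is right-continuous and of bounded variation, so is $g(t):=c\Tt U_O(t)$, and continuity on $(0,T)$ is equivalent to the absence of jumps there; a jump of $g$ at an interior point $t_0$ is exactly $c\Tt \Delta$, where $\Delta := U_O(t_0)-U_O(t_0-)\ge 0$ is the corresponding jump of $U_O$. So it suffices to show that every jump $\Delta$ of $U_O$ at a point $t_0\in(0,T)$ satisfies $c\Tt \Delta = 0$. Suppose instead that $c\Tt \Delta \neq 0$. Fixing a fraction $\lambda \in (0,1)$ and a small $h>0$, I would define a competitor control $U_\lambda$ that agrees with $U_O$ except on a short interval adjacent to $t_0$: if $c\Tt \Delta > 0$ set $U_\lambda(t) = U_O(t) + \lambda \Delta$ on $[t_0-h,t_0)$ (shifting mass earlier), and if $c\Tt \Delta < 0$ set $U_\lambda(t) = U_O(t) - \lambda\Delta$ on $[t_0,t_0+h)$ (shifting mass later). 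In either case $U_\lambda$ is nonnegative, nondecreasing and right-continuous: the jump at $t_0$ is merely reduced to $(1-\lambda)\Delta\ge 0$ while a compensating jump of size $\lambda\Delta$ is created at the other end of the short interval, and monotonicity and nonnegativity follow from $\Delta\ge 0$, $\lambda\le 1$ and $U_O\ge 0$.

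The objective change is immediate from the measure $d(U_\lambda - U_O)$, which consists of the two atoms $+\lambda\Delta$ and $-\lambda\Delta$ placed a distance $h$ apart. In the earlier-shift case these sit at $t_0-h$ and $t_0$, so
\[
\int_{0-}^T (\gamma + (T-t)c)\Tt\, d(U_\lambda - U_O)(t) = \lambda\, c\Tt \Delta\,\big[(T-t_0+h) - (T-t_0)\big] = \lambda h\, c\Tt \Delta > 0,
\]
and the later-shift case gives $-\lambda h\, c\Tt\Delta>0$ likewise; moreover $U_\lambda(T)=U_O(T)$, so no boundary term appears. Thus $U_\lambda$ strictly improves the objective, which will contradict the optimality of $U_O$ once feasibility of $U_\lambda$ is established.

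The only point requiring care — and the crux of the argument — is feasibility at constraints that are active at $t_0$. Writing $x_O(t)=\beta+bt-AU_O(t)\ge 0$ for the primal slack, $U_\lambda$ differs from $U_O$ only on the short interval, where the slack becomes $x_O(t)-\lambda A\Delta$ (earlier shift) or $x_O(t)+\lambda A\Delta$ (later shift), and is unchanged elsewhere. From $x_O(t_0-)=x_O(t_0)+A\Delta$ one gets $x_O(t_0-)\ge A\Delta$ and $x_O(t_0)\ge -A\Delta$ componentwise. The decisive observation is that shifting only a fraction $\lambda<1$ of the impulse leaves a strict margin: for any constraint with $(A\Delta)_k>0$ one has $x_O(t_0-)_k\ge (A\Delta)_k>\lambda (A\Delta)_k$, so by the left-hand limit of $x_O$ at $t_0$ there is $h>0$ with $x_O(t)_k>\lambda (A\Delta)_k$ throughout $[t_0-h,t_0)$, while constraints with $(A\Delta)_k\le 0$ are trivially satisfied; the symmetric estimate, using right-continuity of $x_O$ at $t_0$ together with $x_O(t_0)_k\ge -(A\Delta)_k>-\lambda(A\Delta)_k$ for $(A\Delta)_k<0$, handles the later-shift case. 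Taking the minimum of the finitely many admissible thresholds gives a single $h$ for which $U_\lambda$ is feasible, completing the contradiction. I expect the fractional shift $\lambda<1$ to be the essential device: a full shift ($\lambda=1$) would only give $x_O(t_0-)_k\ge(A\Delta)_k$ with equality possible, and feasibility near an active constraint could then fail.
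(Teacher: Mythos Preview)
Your proof is correct, but it follows a different perturbation from the paper's. The paper replaces $U_O$ on a short interval $[t_a,t_c]$ by the \emph{linear interpolation} between $U_O(t_a)$ and $U_O(t_c)$; feasibility is then immediate from the affine structure of the right-hand side $\beta+bt$ (a convex combination of feasible endpoint values satisfies the constraint at the intermediate convex combination of times), while the objective gain requires an integration-by-parts computation together with a careful choice of $t_a$ near the jump. You instead \emph{shift a fraction $\lambda<1$ of the atom} to a nearby time; your objective gain is trivial (two atoms $h$ apart), but feasibility needs the observation that the strict margin $x_O(t_0-)_k\ge (A\Delta)_k>\lambda(A\Delta)_k$ (respectively $x_O(t_0)_k\ge -(A\Delta)_k>-\lambda(A\Delta)_k$) propagates to a one-sided neighborhood via the existence of one-sided limits of $x_O$. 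Each approach trades effort between feasibility and the objective estimate: the paper exploits the affine constraints to make feasibility free, whereas your $\lambda<1$ device is the key that makes the atom-shift feasible even at active constraints, as you correctly identify.
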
 
\begin{proof}
Assume the contrary. Then, because  $c\Tt U_{O}(t)$ is of bounded variations, it has only jump discontinuities, with left and right limits.  Consider a `jump' point $t_c$ with  $c\Tt U_{O}^\uparrow = c\Tt U_{O}(t_c +) - c\Tt U_{O}(t_c -) \ne 0$. 

Assume first that $c\Tt U_{O}^\uparrow > 0$. 
Let $t_a < t_c$ be a point such that $| c\Tt U_O(t_c-) - c\Tt U_O(t)| < \frac{1}{4}c\Tt U_{O}^\uparrow$ for all $t \in [t_a, t_c)$.  Such a point exists because $U_O$ has left and right limits at $t_c$.   
 Consider the following solution of M-CLP:
\[
\tilde{U}(t)=\left\{ \begin{array}{ll} 
\displaystyle U_{O}(t_a) + \frac{t-t_a}{t_c-t_a} \left(U_{O}(t_c+) - U_{O}(t_a) \right), & t \in [ t_a, t_c ], \\
U_{O}(t), & t \notin [ t_a, t_c].  \\
\end{array} \right.
\]
It is clear that $\tilde{U}(t)$ is feasible. Comparing the objective values for $U_{O}(t)$ and $\tilde{U}(t)$ we obtain:

\begin{eqnarray*}
&& \int_0^T (\gamma\Tt + c\Tt(T-t))d\tU(t) - \int_0^T (\gamma\Tt + c\Tt(T-t))dU_O(t) \\
&& = \int_{t_a}^{t_c+} c\Tt t dU_O(t) -  \int_{t_a}^{t_c+} c\Tt t d\tU(t)  \\
&&=  \frac{t_c-t_a}{2}  \Big(c\Tt U_O^\uparrow + c\Tt U_O(t_c-)+ c\Tt U_O(t_a)\Big) 
-   \int_{t_a}^{t_c-} c\Tt U_O(t) dt  \\
&&\ge  \frac{t_c-t_a}{2} \Big(c\Tt U_O^\uparrow -  \big(2 \sup_{t\in [t_a,t_c)}c\Tt {U}_O(t)- c\Tt {U}_O(t_a) - c\Tt U_O(t_c-)\big)\Big) > 0 
\end{eqnarray*}
where in the second equality we replace Lebesgue-Stieltjes integral by Riemann-Stieltjes integral and integrate by parts.
 This contradicts the optimality of $U_{O}(t)$.  
A similar contradiction is obtained if $c\Tt U_{O}^\uparrow < 0$,  considering a point $t_a > t_c$.
We conclude that $c\Tt U_{O}(t)$ has no jumps, and hence is continuous on $(0,T)$.
\qquad\end{proof}

\begin{proposition}
\label{thm.mclpconc}
$c\Tt U_{O}(t)$ is concave on $(0, T)$.
\end{proposition}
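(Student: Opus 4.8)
The plan is to mirror the variational \emph{exchange} argument of Proposition \ref{thm.mclpcont}: assume $c\Tt U_O$ fails to be concave and construct a feasible perturbation with strictly larger objective, contradicting optimality. First I would record a convenient form of the objective. Writing $g(t)=c\Tt U_O(t)$ and integrating $\int_{0-}^T(T-t)\,dg(t)$ by parts (recall $U_O(0-)=0$, so $g(0-)=0$, and the boundary terms vanish at both ends since $(T-t)=0$ at $t=T$ and $g(0-)=0$ at $t=0-$), the objective of M-CLP equals $\gamma\Tt U_O(T)+\int_0^T g(t)\,dt$. The point of this reformulation is that any modification of $U_O$ that leaves $U_O(T)$ unchanged alters the objective only through $\int_0^T g$.

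Next, suppose for contradiction that $g$ is not concave on $(0,T)$. By Proposition \ref{thm.mclpcont} $g$ is continuous there, so non-concavity yields points $t_a<t_b$ in $(0,T)$ and an interior $t_m$ at which $g$ lies strictly below the chord $L$ joining $(t_a,g(t_a))$ and $(t_b,g(t_b))$. I would then pass to a maximal subinterval on which $g$ stays below this chord: setting $h=g-L$ and $t_a^*=\sup\{t\le t_m:h(t)\ge 0\}$, $t_b^*=\inf\{t\ge t_m:h(t)\ge 0\}$, continuity gives $t_a\le t_a^*<t_m<t_b^*\le t_b$ with $h(t_a^*)=h(t_b^*)=0$ and $h<0$ on $(t_a^*,t_b^*)$. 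Since $L$ is affine and agrees with $g$ at $t_a^*$ and $t_b^*$, it is precisely the chord of $g$ over $[t_a^*,t_b^*]$, so $L>g$ strictly on the open interval.

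Then I would define the perturbed control $\tU$ to equal $U_O$ outside $[t_a^*,t_b^*]$ and to be the straight-line interpolation $\tU(t)=\frac{t_b^*-t}{t_b^*-t_a^*}U_O(t_a^*)+\frac{t-t_a^*}{t_b^*-t_a^*}U_O(t_b^*)$ on $[t_a^*,t_b^*]$. Feasibility is the routine-but-essential check: monotonicity of $U_O$ makes $\tU$ non-negative, non-decreasing and right continuous, while the hard constraint is preserved because for $t\in[t_a^*,t_b^*]$ the same convex combination gives $A\tU(t)\le\frac{t_b^*-t}{t_b^*-t_a^*}(\beta+bt_a^*)+\frac{t-t_a^*}{t_b^*-t_a^*}(\beta+bt_b^*)=\beta+bt$. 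Since $\tU(T)=U_O(T)$ and $c\Tt\tU$ coincides with $L$ on $[t_a^*,t_b^*]$, the reformulated objective increases by exactly $\int_{t_a^*}^{t_b^*}\big(L(t)-g(t)\big)\,dt>0$, contradicting optimality of $U_O$; hence $g$ is concave.

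The step I expect to require the most care is the selection of the subinterval $[t_a^*,t_b^*]$: the crude chord over $[t_a,t_b]$ need not lie above $g$ on the whole interval, so a naive objective difference could fail to be positive. The maximality construction via $h=g-L$ is what guarantees a strictly positive gain, and it relies crucially on the continuity of $c\Tt U_O$ established in Proposition \ref{thm.mclpcont}. A secondary point to verify is that replacing $U_O$ by its linear interpolation preserves $AU(t)\le\beta+bt$, which works precisely because this constraint is affine in $t$ and convex in $U$.
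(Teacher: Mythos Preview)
Your proposal is correct and follows essentially the same variational argument as the paper: assume non-concavity, find a subinterval on which the chord of $c\Tt U_O$ lies strictly above the function, replace $U_O$ by its linear interpolation on that subinterval, and show via integration by parts that the objective strictly increases. The paper simply asserts the existence of such an interval $(t_1,t_2)$ and says feasibility ``is clear,'' whereas you explicitly construct the maximal subinterval and verify the constraint; these are elaborations of the same proof, not a different route.
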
 
\begin{proof}
Assume the contrary. Then, since by Proposition \ref{thm.mclpcont} $c\Tt U_{O}(t)$ is continuous, there exists an interval $(t_1, t_2)$ such that:
\[ c\Tt U_{O}(t) <  c\Tt U_{O}(t_1) + \frac{t-t_1}{t_2-t_1} \left( c\Tt U_{O}(t_2) - c\Tt U_{O}(t_1) \right), \quad t \in (t_1, t_2)
\]
Consider the following solution of M-CLP:
\[
U^*(t)=\left\{ \begin{array}{ll} 
\displaystyle U_{O}(t_1) + \frac{t-t_1}{t_2-t_1} \left(U_{O}(t_2) - U_{O}(t_1) \right), & t \in ( t_1, t_2 ), \\
U_{O}(t), & t \notin ( t_1, t_2).  \\
\end{array} \right.
\]
It is clear that $U^*(t)$ is feasible. We note also that from our assumption it follows that $c\Tt U_{O}(t) < c\Tt U^*(t)$ on $(t_1, t_2)$. Comparing objective values for $U_{O}(t)$ and $U^*(t)$ we obtain, similar to the proof of Proposition \ref{thm.mclpcont}:
\begin{eqnarray*}
&& \int_0^T (\gamma\Tt + c\Tt(T-t))dU^*(t) - \int_0^T (\gamma\Tt + c\Tt(T-t))dU_O(t) \\
&& = \int_{t_1}^{t_2} c\Tt t dU_O(t) - \int_{t_1}^{t_2} c\Tt t dU^*(t)   \\
&& = \int_{t_1}^{t_2} c\Tt U^*(t) dt - \int_{t_1}^{t_2} c\Tt U_O(t) d t  
> 0.
\end{eqnarray*}
 This contradicts the optimality of $U_{O}(t)$. Hence $c\Tt U_{O}(t)$ is concave.
\qquad\end{proof}

By the Lebesgue decomposition theorem any feasible solution of M-CLP can be represented as $U(t)= U_a(t) + U_s(t)$, where $U_a(t)$ is an absolutely continuous function and $U_s(t)$ is a singular function, including a discrete singular (`jump') part and a continuous singular part.   
\begin{proposition}
\label{thm.cudecr}
Consider an optimal solution $U_O(t)$ and its Lebesgue decomposition $U_O(t)= U_a(t) + U_s(t)$, and let  $u(t)= \frac{dU_a(t)}{dt}$.  Then the following holds:
\begin{description}
 \item[(i)]
\[
 \frac{d}{dt} c\Tt U_O(t)= c\Tt u(t)
 \]
 \item[(ii)]
 $c\Tt u(t)$ is a non-increasing function.
\item[(iii)]
\[  
\int_{0-}^T (\gamma + (T-t)c)\Tt dU_O(t) = \gamma\Tt U_O(T) + c\Tt T U_O(T-) - \int_0^T c\Tt t u(t) dt
\]
\end{description}
\end{proposition}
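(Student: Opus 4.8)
The plan is to derive (i) and (ii) from the concavity of $c\Tt U_O$ established in Proposition \ref{thm.mclpconc}, and then to obtain (iii) by a direct evaluation of the Lebesgue--Stieltjes objective integral using the decomposition supplied by (i). The guiding observation is that, although $U_O$ itself may carry a continuous singular part or interior atoms, the scalar function $g(t)=c\Tt U_O(t)$ is far better behaved: by Proposition \ref{thm.mclpcont} it is continuous on $(0,T)$ and by Proposition \ref{thm.mclpconc} it is concave there.

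For part (i) I would first note that a finite concave function on the open interval $(0,T)$ is automatically locally Lipschitz, hence absolutely continuous on every compact subinterval $[a,b]\subset(0,T)$. Writing $g = c\Tt U_a + c\Tt U_s$ from the Lebesgue decomposition, the term $c\Tt U_a$ is absolutely continuous with density $c\Tt u$, so the difference $c\Tt U_s = g - c\Tt U_a$ is absolutely continuous on $[a,b]$ as well. But $c\Tt U_s$ is a singular function and thus has derivative zero almost everywhere; an absolutely continuous function with a.e.\ vanishing derivative is constant, so $c\Tt U_s$ is constant on $(0,T)$. Consequently $\frac{d}{dt}c\Tt U_O(t) = \frac{d}{dt}c\Tt U_a(t) = c\Tt u(t)$ almost everywhere, which is exactly (i).

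Part (ii) is then immediate: the derivative of a concave function is non-increasing wherever it exists, and by (i) this derivative is $c\Tt u(t)$, so $c\Tt u$ admits a non-increasing representative. For part (iii) I would split the measure $c\Tt\,dU_O$ into its atom at $0$ of mass $c\Tt U_O(0)$, its absolutely continuous part with density $c\Tt u(t)$ on $(0,T)$ (justified by (i), since the $c$-projection of the singular part is constant and hence contributes no interior mass), and its atom at $T$. The atom at $T$ is annihilated by the factor $(T-t)$, which vanishes there, while $\int_{0-}^T \gamma\Tt dU_O(t) = \gamma\Tt U_O(T)$ since $U_O(0-)=0$. Using right-continuity of $U_O$ to identify $c\Tt U_O(0+)=c\Tt U_O(0)$, together with the fundamental theorem of calculus in the form $\int_0^T c\Tt u(t)\,dt = c\Tt U_O(T-) - c\Tt U_O(0)$, I would expand the affine weight $(T-t)$; the terms carrying $c\Tt U_O(0)$ cancel, leaving precisely $\gamma\Tt U_O(T) + c\Tt T\,U_O(T-) - \int_0^T c\Tt t\,u(t)\,dt$.

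I expect the only delicate step to be the claim in (i) that the $c$-projection of the singular part is constant: one must resist concluding anything about $U_s$ itself, which may genuinely be singular, and argue solely at the level of the scalar $c\Tt U_O$, exploiting that concavity upgrades mere continuity to local absolute continuity. The bookkeeping in (iii), by contrast, is routine once the atom at $T$ is seen to drop out and the endpoint value $U_O(0+)=U_O(0)$ is pinned down by right-continuity.
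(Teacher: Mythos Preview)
Your proposal is correct and follows essentially the same route as the paper. For (i) the paper simply invokes ``uniqueness of the Lebesgue decomposition'' after observing that concavity makes $c\Tt U_O$ absolutely continuous on $(0,T)$, whereas you spell out that same step explicitly via the constant--$c\Tt U_s$ argument; for (iii) the paper passes to the Riemann--Stieltjes integral and integrates by parts, which is the same bookkeeping as your direct splitting of the measure followed by the fundamental theorem of calculus.
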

{\em Proof}.
(i)  By Proposition \ref{thm.mclpconc} $c\Tt U_O(t)$ is concave and hence it  is absolutely continuous on the interval $(0,T)$. Therefore, by the uniqueness of the Lebesgue decomposition, $c\Tt (U_O(t)-U_O(0)) = c\Tt U_a(t)$ on this interval.  

(ii) That $c\Tt u(t)$ is non-increasing follows from the concavity of $c\Tt U_O(t)$.

(iii)  Because $(\gamma + (T-t)c)\Tt$ is continuous the Lebesgue-Stieltjes integral above can be replaced by the Riemann-Stieltjes integral.  
\begin{eqnarray*}  
&& \int_{0-}^T (\gamma+ (T-t)c)\Tt dU_O(t) \\
&& = \gamma\Tt U_O(T) + \int_{0-}^T (T-t) d c\Tt U_O(t) \\
&& = \gamma\Tt U_O(T) + c\Tt T U_O(0) + \int_{0+}^{T-} (T-t) d c\Tt U_O(t) \\
&& = \gamma\Tt U_O(T) + c\Tt T U_O(0) + \int_0^T (T-t) c\Tt u(t) dt \\
&& = \gamma\Tt U_O(T) + c\Tt T U_O(T-) - \int_0^T c\Tt t u(t) dt \qquad\endproof
\end{eqnarray*}  

For  part (iii) of the next theorem we need the following non-degeneracy assumption:
\begin{assumption}
\label{asm.nondeg}
The vector $c$ is in general position to
the matrix $\left[ A\Tt \; I  \right]$
(it is not a linear combination of any less than $J$ columns).
\end{assumption}
\begin{theorem}
\label{thm.solform}
Assume that  M-CLP/M-CLP$^*$ have optimal solutions $U_O(t), P_O(t)$ with no duality gap, then: 

\noindent
(i)  $c\Tt U_O(t)$ is piecewise linear on $(0, T)$ with a finite number of breakpoints. 

\noindent

(ii) There exists an optimal solution $U^*(t)$ that is continuous and piecewise linear on $(0, T)$.

\noindent
(iii) Under the non-degeneracy assumption  \ref{asm.nondeg}, every optimal solution is of this form, and furthermore, $U_O(t)-U_O(0+)$ is unique over $(0,T)$.
\end{theorem}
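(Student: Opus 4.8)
My plan is to treat the three parts in order, building each on the concavity results already established. For part (i), I start from Propositions~\ref{thm.mclpcont}--\ref{thm.cudecr}, which tell me that $c\Tt U_O(t)$ is continuous and concave on $(0,T)$ with non-increasing density $g(t)=c\Tt u(t)$. A concave function is piecewise linear with finitely many breakpoints precisely when its slope is a step function with finitely many steps, so it suffices to show that $g$ takes only finitely many values. Here I would invoke complementary slackness (\ref{eqn.gcompslack}): fixing the optimal dual $P_O$ from the hypothesis, the relation $\int_0^T q(T-t)\Tt dU_O(t)=0$ forces $dU_O$ to be supported, at each $t$, on the index set $\J(t)=\{j:\hat q_j(t)=0\}$ where the dual constraint is tight, with $\hat q(t)=A\Tt P_O(T-t)-\gamma-c(T-t)$. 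On a maximal interval where the pair of active sets $(\J(t),\K(t))$, with $\K(t)=\{k:x_k(t)=0\}$, is constant, the density solves the linear rate system $(Au)_k=b_k$ for $k\in\K(t)$, $u_j=0$ for $j\notin\J(t)$, $u\ge0$; hence $g(t)=c\Tt u(t)$ is one of the finitely many values attained by basic rate solutions. A non-increasing function with finite range is a step function with finitely many steps, so $c\Tt U_O$ is piecewise linear with finitely many breakpoints. The crucial simplification is that monotonicity of the slope, already in hand, removes any danger of the active basis cycling infinitely often.

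For part (ii), let $0=\tau_0<\cdots<\tau_m=T$ be the breakpoints from (i). I construct $U^*$ by keeping the impulses of $U_O$ at the endpoints, $U^*(\{0\})=U_O(0)$ and $U^*(\{T\})=U_O(T)-U_O(T-)$, and, on $(0,T)$, linearly interpolating the nodes $v_i=U_O(\tau_i-)$ (with $v_0=U_O(0+)$, $v_m=U_O(T-)$). This $U^*$ is continuous and piecewise linear on $(0,T)$ by construction, and non-decreasing because $v_{i-1}\le v_i$ componentwise. Feasibility is the key point: on each subinterval the function $AU^*(t)-\beta-bt$ is affine in $t$ and is $\le 0$ at both nodes (each $v_i$ is a left limit of feasible values and the right-hand side is continuous), hence $\le 0$ throughout. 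Finally, since $c\Tt U_O$ is linear between breakpoints and $c\Tt U^*$ agrees with it at the nodes (using continuity of $c\Tt U_O$), we get $c\Tt U^*=c\Tt U_O$ on $(0,T)$; as the endpoint masses are unchanged, the value formula of Proposition~\ref{thm.cudecr}(iii) shows the objective is preserved, so $U^*$ is optimal.

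For part (iii), I use Assumption~\ref{asm.nondeg}. Fix again the optimal dual $P_O$; by no gap, complementary slackness holds between $P_O$ and every optimal primal $U_O$, so the support set $\J(t)$ above is the same for all optimal primals. On each interval the rate is constrained by the system $(Au)_{\K(t)}=b_{\K(t)}$, $u_{\J(t)^c}=0$. The general-position hypothesis on $c$ relative to $[A\Tt\;I]$ translates, through the dual optimality system $A\Tt P_O-q=\gamma+ct$, into non-degeneracy of the active dual basis, forcing $|\J(t)|=|\K(t)|$ with $A_{\K(t),\J(t)}$ invertible. This yields two conclusions at once. First, a jump of $U_O$ at an interior point would be a nonzero direction $v\ge0$ with $c\Tt v=0$ supported on $\J(t)$ and annihilating the active constraints, i.e.\ a nontrivial solution of a homogeneous system excluded by invertibility; hence there are no interior jumps, and by the same token no continuous singular part, so every optimal solution is continuous and piecewise linear on $(0,T)$. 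Second, $u(t)=A_{\K(t),\J(t)}^{-1}b_{\K(t)}$ is uniquely determined and independent of the particular optimal primal, so $U_O(t)-U_O(0+)=\int_0^t u(s)\,ds$ is unique over $(0,T)$.

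The part I expect to be most delicate is the rigorous link, in (i) and (iii), between the continuous-time optimum and a finite LP basis structure: justifying that the active sets $(\J(t),\K(t))$ are piecewise constant, that the rate is genuinely a basic solution, and that Assumption~\ref{asm.nondeg} indeed forces the active sub-basis to be square and invertible, thereby simultaneously ruling out jumps, ruling out a continuous singular part, and pinning down the rate. Handling the endpoints $0$ and $T$, where impulses are permitted and one-sided limits must be matched, will require additional care but is routine once the interior structure is in place.
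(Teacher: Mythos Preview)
Your overall plan mirrors the paper's, and part (ii) is essentially correct. The real gap is in part (i), at exactly the step you yourself flag as delicate. From the system $(Au)_k=b_k$ for $k\in\K(t)$, $u_j=0$ for $j\notin\J(t)$, $u\ge0$, you conclude that $c\Tt u(t)$ is ``one of the finitely many values attained by basic rate solutions,'' but nothing forces $u(t)$ to be basic: the feasible set is a polyhedron on which $c\Tt u$ can range over an entire interval, and a non-increasing function whose range lies in a finite union of intervals need not be a step function. The paper closes this gap by bringing in the dual density as well. Differentiating both sets of constraints and using both halves of (\ref{eqn.gcompslack}), it shows that $(u(t),\dot x(t))$ and $(p(T-t),\dot q(T-t))$ are a complementary-slack pair of \emph{optimal} solutions of a dual pair of finite LPs (the Rates-LP/Rates-LP$^*$ in (\ref{eqn.rates})), whose sign restrictions are determined by $(\J(t),\K(t))$. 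Hence $c\Tt u(t)$ is the optimal \emph{value} of Rates-LP$(t)$, a number fixed by the index sets alone, and finitely many index-set pairs give finitely many slope values. Note too that the paper takes $\J(t),\K(t)$ to be the supports of $u(t)$ and $p(T-t)$, not the zero sets of the slacks; this makes both LPs automatically feasible with the given solutions and avoids having to first prove that active sets are piecewise constant.

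In part (iii) your idea is right in spirit, but the exclusion of the singular part is incomplete: it is not clear why a jump direction $v$ must satisfy $(Av)_k=0$ on $\K(t)$ (feasibility only gives $(Av)_k\le 0$ there), and the continuous singular part is certainly not handled ``by the same token.'' The paper's route is cleaner and avoids analysing jump directions: non-degeneracy of $c$ relative to $[A\Tt\; I]$ makes each dual rate $p_n$ a non-degenerate optimal solution of Rates-LP$^*$, so the complementary primal rate is unique. Then the interpolated slope $u_m^*=u_m+(t_m-t_{m-1})^{-1}(U_s(t_m)-U_s(t_{m-1}))$, which is also optimal and complementary to $p_m$, must equal $u_m$, forcing $U_s(t_m)-U_s(t_{m-1})=0$ on every subinterval and killing the entire singular part at once.
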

\begin{proof}
(i) By the Lebesgue differentiation theorem $U_O(t), P_O(t)$ can be differentiated at least almost everywhere. Let $S$ be the set where $U_O(t)$ is not differentiable and $S^*$ be the set where $P_O(T-t)$ is not differentiable. Let $E_0 = [0,T] \setminus S \cup S^*$. Then the complementary slackness condition (\ref{eqn.gcompslack}) can be rewritten as:
\[ 
\int_{E_0} q(T-t) \Tt u(t) dt = \int_{S \cup S^*} q(T-t)\Tt dU_s(t) = \int_{E_0} x(T-t)\Tt p(t) dt =  \int_{S \cup S^*} q(T-t)\Tt dP_s(t) = 0
\]
where $u(t) = \frac{dU_O(t)}{dt}, p(t) = \frac{dP_O(t)}{dt}$ are the densities of $U_O(t), P_O(t)$ on $E$ and $x(t)=\beta + bt - A U_O(t), q(t) = A\Tt P_O(t) - \gamma - ct$ are slack functions.   Hence, we must have 
for every point of $E_0$ apart from another  set of measure zero $S_1$, that  $q(T-t)\Tt u(t) = x(T-t)\Tt p(t) = 0$.
Let $E = E_0 \setminus S_1$.  

At the same time, differentiating the constraints of M-CLP/M-CLP$^*$ everywhere on the set $E$ we obtain:
\begin{equation} 
\label{eqn.rates_constraints}
Au(t) + \dx(t) = b \qquad A\Tt p(t) - \dq(t) = c
\end{equation}
where $\dx(t), \dq(t)$ are the slopes of the corresponding slacks. 

Note that on $E$, $x(t) = 0 \implies \dx(t) = 0$ and $q(t) = 0 \implies \dq(t) = 0$, by non-negativity. Hence, for every $t \in E$ the following holds:
\begin{equation}
\label{eqn.rates_complslack}
\dq(T-t)\Tt u(t) = \dx(T-t)\Tt p(t) = 0
\end{equation}

Recall also that $U_O,P_O$ and hence also $U_a,P_a$ are non-decreasing, so:
\begin{equation}
\label{eqn.unonneg}
u(t) \ge0, \qquad p(t)\ge 0.
\end{equation}

Consider now any point $t\in E$, and the values of $u(t),x(t),p(T-t),q(T-t)$.  Let $\J(t)$, $\K(t)$ be the indices of the non-zero components of $u(t)$ and of $p(T-t)$ respectively.  
One can see that (\ref{eqn.rates_constraints}), (\ref{eqn.rates_complslack}), (\ref{eqn.unonneg}) imply that $u=u(t),\dx=\dx(t),p=p(T-t),\dq=\dq(T-t)$ are optimal solutions of the following pair of linear programming problems:
\begin{equation}
\label{eqn.rates}
\begin{array}{c}
\begin{array}{rcl}
 & \max &  c\Tt u  \\
 &\mbox{s.t.}& A u + I \dx = b \\
\mbox{Rates-LP$(t)$} && u_j \in \setZ \text{ for } j \notin \J(t)\; u_j \in \setP \text{ for } j \in \J(t) \\
&& \dx_k \in \setU \text{ for } k \notin \K(t) \; \dx_k \in \setP \text{ for } k \in \K(t) 
\end{array} \\ 
\\
\begin{array}{rcl}
 & \min &  b\Tt p  \\
&\mbox{s.t.}& A\Tt p - I \dq = c \\
\mbox{Rates-LP$^*(t)$} && p_k \in \setZ \text{ for } k \notin \K(t)\; p_k \in \setP \text{ for } k \in \K(t) \\
&& \dq_j \in \setU \text{ for } j \notin \J(t) \; \dq_j \in \setP \text{ for } j \in \J(t)
\end{array}
\end{array}
\end{equation}
where by $\setZ, \setP, \setU$ we denote the following sign restrictions: $\setZ = \{0\}$ is zero, $\setP = \setR_+$ is non-negative and $\setU = \setR$ is unrestricted.

Let $M$ be the finite number of subsets of indices $\J(t),\K(t)$ for which the dual pair of linear programs (\ref{eqn.rates}) is feasible.  Then it follows that the values of  $c\Tt u(t)$ for all $t\in E$ must be the objective values of an optimal solution  of (\ref{eqn.rates}), for one of these  subsets.  Since by Proposition \ref{thm.cudecr}(ii) $c\Tt u(t)$ is non-decreasing there must exist a partition $0=t_0 < t_1 < \cdots < t_N=T,\;N \le M$ such that $c\Tt u(t)$ is constant over each interval $(t_{n-1},t_n)\cap E$.  Recall that by Proposition \ref{thm.mclpconc} $c\Tt U_O(t)$ is absolutely continuous on $(0,T)$, and hence
 $c\Tt U_O(t) = c\Tt U_O(0) + \int_0^t c\Tt u(t) dt$.  It follows that $c\Tt U_O(t)$ is continuous piecewise linear on $(0,T)$.

(ii) Consider the following solution of M-CLP:
\[
U^*(t)=\left\{ \begin{array}{ll} 
\displaystyle U_{O}(t_n) + \frac{t-t_n}{t_{n+1}-t_n} \left(U_{O}(t_{n+1}) - U_{O}(t_n) \right), & \begin{array}{l}t \in ( t_n, t_{n+1}], \; n=0,\dots, N-2 \\ \text{and }t \in (t_{N-1}, t_N),\end{array} \\
U_{O}(t), & \; t = 0, T.  \\
\end{array} \right.
\]
where $0=t_0 < t_1 < \dots < t_N=T$ is the time partition defined in the proof of (i). Note, that $U^*(t)$ is piecewise linear and absolutely continuous on $(0,T)$. One can see that  $U^*(t)$ is also a feasible solution for M-CLP. Let $u^*(t) = \frac{dU^*(t)}{dt}$. Similar to  Proposition \ref{thm.cudecr}(iii) and based on its proof, we could rewrite the objective value obtained with $U^*(t)$ as follows:
\begin{equation}
\label{eqn.another_obj}
\int_{0-}^T (\gamma+ (T-t)c)\Tt dU^*(t) = \gamma\Tt U_O(T) + c\Tt T U_O(T-) - \int_0^T c\Tt t u^*(t) dt
\end{equation}
 Recall also, that by Proposition \ref{thm.mclpconc} $c\Tt U_O(t)$ is concave on $(0,T)$ and hence absolutely continuous on $(0,T)$. Then, comparing the objective values for $U_{O}(t)$ and $U^*(t)$ we obtain:
\begin{eqnarray*}
&& \int_{0-}^T (\gamma\Tt + c\Tt(T-t))dU_O(t) - \int_{0-}^T (\gamma\Tt + c\Tt(T-t))dU^*(t)   \\ 
&& =  \int_0^T c\Tt t u^*(t) dt - \int_0^T c\Tt t u(t) dt \\
&&=  \sum_{n=0}^{N-1} \int_{t_n}^{t_{n+1}} t \left(\frac{d}{dt} \left(c\Tt U_O(t_n) + \frac{t-t_n}{t_{n+1}-t_n} \left(c\Tt U_{O}(t_{n+1}) - c\Tt U_{O}(t_n)\right)\right) - c\Tt u(t) \right) dt = 0
\end{eqnarray*}
where the first equality follows from Proposition \ref{thm.cudecr}(iii) and (\ref{eqn.another_obj}), and the second follows from (i) of this Theorem. Hence, $U^*(t)$ is an optimal solution of M-CLP.

(iii) Let $U_O(t)$ be any optimal solution of M-CLP.  
As shown in (i), there is a time partition $0=t_0 < t_1 < \dots < t_N=T$ so that $c\Tt U_O(t)$ is continuous piecewise linear, with  constant slope in each interval, where the slopes in successive intervals are strictly decreasing.   Let $I_n=(t_{n-1},t_n],\,n=1,\ldots,N-1,\;I_N=(t_{N-1},t_N)$.
As shown in (ii) we can construct a dual optimal solution $P^*(t)$  which is continuous piecewise linear on $(0,T)$, with breakpoints  $0=t_0 < t_1 < \dots < t_N=T$.   For this dual solution we have 
that $p(t) = \frac{d P^*(t)}{dt}$ is constant on each interval $I_n$,  let $p_n=p(t),\,t\in I_n$ denote this  vector value.   Then as shown in (i), $p_n$ is a solution of the dual Rates-LP$^*$ (\ref{eqn.rates}).  

Consider now $u(t)=\frac{dU_a(t)}{dt}$ which is defined almost everywhere on $(0,T)$.  As shown in (i), $u(t)$ is an optimal solution of the primal Rates-LP 
(\ref{eqn.rates}), and $u(t)$ is complementary slack to $p(t)$ almost everywhere.  By the non-degeneracy assumption \ref{asm.nondeg}, $p_n$ is non-degenerate.  Hence, $u(t)$ is uniquely determined almost everywhere on $I_n$, as the unique solution which is complementary slack to $p_n$.  Denote this solution by $u_n,\,n=1,\ldots,N$.  This uniquely determines $U_a(t),\,t\in (0,T)$, the absolutely continuous part of $U_O(t)$, as the continuous piecewise linear vector of functions with slopes $u_n$ in each interval. 

It remains to show that $U_O$ is absolutely continuous in $(0,T)$, i.e that $U_s(T-)-U_s(0+)=0$.

Assume to the contrary that  in some interval $(t_{m-1}, t_{m}]$ we have $U_s(t_m)-U_s(t_{m-1})>0$ (or if $m=N$, $U_s(T-)-U_s(t_{N-1})>0$).
Define 
\[
U^*_m(t)=\left\{ \begin{array}{ll} 
\displaystyle U^*(t) & t \in I_m\\
U_{O}(t), & t \notin I_m,  
\end{array} \right.   \qquad
u^*_m(t)=\left\{ \begin{array}{ll} 
\displaystyle \frac{dU^*(t)}{dt} & t \in I_m\\
u(t), & t \notin I_m.  
\end{array} \right.
\]
where $U^*(t)$ on $I_m$ is the linear interpolation as defined in proof of (ii),  and $u^*_m=u^*_m(t)$ for $t\in I_m$ is the constant slope $\frac{U_O(t_m)-U_O(t_{m-1})}{t_m-t_{m-1}}$ on $I_m$.   

Similar to (ii), it follows that  $U^*_m(t)$ is also an optimal solution of M-CLP.   Furthermore, since 
the solutions are identical on $t\not\in I_m$, we must by (i) have that $c\Tt u_m = c\Tt u^*_m$.

We now have, on the one hand, that:
\[ u^*_m = \frac{U_{O}(t_m) - U_{O}(t_{m-1})}{t_m-t_{m-1}}  = u_m + \frac{1}{t_m-t_{m-1}} \left(U_{s}(t_m) - U_{s}(t_{m-1})\right) \ne  u_m 
\]

On the other hand, as we saw before, $u^*_m(t)$ must be complementary slack to $p(t)$ almost everywhere, and hence, $u^*_m=u_m$.    This contradiction proves that $U_s(T-)-U_s(0+)=0$, and shows that $U_O(t)$ is absolutely continuous on $(0,T)$.

Furthermore, $U_O(t)$ for $t\in (0,T)$ is continuous piecewise linear, and $U_O(t)-U_O(0+)$ is uniquely determined by the partition $0=t_0 < t_1 < \dots < t_N=T$ and the slope vectors $u_n$.  This completes  the proof or (iii).
\qquad\end{proof}

\subsubsection*{Completion of the proof of Theorem \ref{thm.generalization}} 
(iii) We first show that for objective values $V$ holds $V(SCLP) \le V(\mbox{M-CLP})$. Consider following CLP problem:
\begin{eqnarray*}
\label{eqn.DCLP}
&\min & \int_0^T (\alpha + (T-t)a)\Tt d P(t) + \int_0^T b\Tt q(t) \,dt     \nonumber  \hspace{0.9in} \\
\mbox{DCLP}  \quad&\mbox{s.t.} &  G\Tt\, P(t) + H\Tt q(t) \ge \gamma + c t \\
 && \quad\; F\Tt P(t) + P_s(t) \ge d t \nonumber \\
 && \quad\; - F\Tt P(t) - P_s(t) \ge  - d t \nonumber \\
&& \quad q(t)\ge 0, \quad P(t), P_s(t) \mbox{ non-decreasing and right continuous on [0,T].}   \nonumber
\end{eqnarray*}
which is a generalization of Pullans' dual for the case when $a(t),c(t)$ are affine, and  $b(t)$ is constant. 

One can see that weak duality holds between SCLP (\ref{eqn.PWSCLP})  and DCLP.  One can also see that weak duality holds between  the  M-CLP extension and DCLP.  But under the Slater type condition, the M-CLP/M-CLP$^*$ extensions possess  primal and dual optimal solutions $\tU(t), \tP(t)$ with no duality gap (see Theorem \ref{thm.strongduality}). Now, setting $P(t) = \tP_*(t),\,P_s(t) = \tP_s(t),\, q(t) = \tP^+(t) - \tP^-(t)$ we obtain a feasible solution of DCLP with the same objective value. This solution is optimal for DCLP by weak duality between M-CLP extension and DCLP. Then, by weak duality between SCLP and DCLP $V(SCLP) \le V(DCLP) = V(\mbox{M-CLP})$.  

Now, consider $u^*(t), x^*(t)$ be a feasible solution of SCLP, where $x^*(t)$ is of bounded variation. Because $G \int_0^t u^*(s)ds$ and right hand side of SCLP are both absolutely continuous such solution could be easily found.  Moreover, this solution could be translated to a solution of M-CLP extension as shown in the proof of (i). Consider also an additional constraint $u(t) \le W, 0 \le t \le T$, where $W \ge \max_{j, 0 < t < T} u^*_j(t)$, which preserves SCLP feasibility. We denote SCLP with this additional constraint as SCLP$(W)$. It is clear that $u^*(t), x^*(t)$ still be feasible for SCLP$(W)$. Let M-CLP$(W)$ be an extension of the SCLP$(W)$. One can see that M-CLP$(W)$ is nothing also as M-CLP extension of the SCLP with following additional constraints:
\begin{equation}
\label{eqn.ubconstr} 
\begin{array}{c}
U_*(t) + U_s(t) \le W t \\
- U_*(t) - U_s(t) \le - W t
\end{array}
\end{equation}
It is clear that M-CLP$(W)$ is feasible. Moreover, one could choose $W$ big enough to preserve the  Slater type condition for the M-CLP$(W)$. Furthermore, one can see that the dual M-CLP$(W)^*$ is a relaxation of the  M-CLP$^*$, and therefore the Slater type condition still holds for the  M-CLP$(W)^*$. 

Now, consider $\tU(t)$ to be an optimal solution of M-CLP$(W)$ (existence of the such solution follow from Theorem \ref{thm.strongduality}). One could see, that constraint (\ref{eqn.ubconstr}) require that for this solution $\tbu^0_* = \tbu^N_* = \tbu^0_s = \tbu^N_s = 0$, and hence this solution could be translated back to an optimal solution of SCLP$(W)$ by taking:  
\[ u^{**} = \frac{d \tU_*}{dt}  \quad x^{**} = U^{+}(t) - U^{-}(t)
\]

Finally, consider a sequence $W^{(n)}= nW, n=1,\dots$ and let $\tU^{(n)}$ be a sequence of optimal solutions of M-CLP$(W^{(n)})$, and $u^{(n)}, x^{(n)}$ be a sequence of corresponding optimal solutions of SCLP$(W^{(n)})$. It is clear that feasible region growth in $n$, and hence sequences of objective values $V(SCLP(W^{(n)})) = V(\mbox{M-CLP}(W^{(n)}))$ involving by corresponding solutions are increasing. Moreover, $\tU^{(n)}$ are vectors of non-negative non-decreasing uniformly bounded functions, which are feasible solution of M-CLP.  Hence, letting $n \to \infty$ and repeating arguments from the proof of existing optimal solution (second part of the proof of the Theorem \ref{thm.strongduality}) we obtain: 
\[ 
\lim_{n \to \infty} V(SCLP(W^{(n)})) =  \lim_{n \to \infty} V(\mbox{M-CLP}(W^{(n)})) = V(\mbox{M-CLP})
\]  
 $ \square$

\end{document}